\newcommand{\C}{\mathbb{C}}
\newcommand{\Z}{\mathbb{Z}}
\newcommand{\Zhat}{\widehat{Z}}
\newcommand{\Zhathat}{\widehat{\vphantom{\rule{5pt}{10pt}}\smash{\widehat{Z}}\,}\!}
\newtheorem{thm}{Theorem}[section]
\newtheorem*{thm*}{Theorem}
\newtheorem{cor}[thm]{Corollary}
\newtheorem{prop}[thm]{Proposition}
\newtheorem{lem}[thm]{Lemma}
\theoremstyle{definition}
\newtheorem{defn}[thm]{Definition}
\newtheorem{defns}[thm]{Definitions}
\newtheorem{exmp}[thm]{Example}
\newtheorem{question}[thm]{Question}
\theoremstyle{remark}
\newtheorem{rem}[thm]{Remark}
\title{$(t,q)$-Series Invariants of Seifert Manifolds}
\begin{document}
\begin{abstract} Gukov, Pei, Putrov, and Vafa developed a $q$-series invariant of negative definite plumbed $3$-manifolds with spin$^{c}$ structures, building on earlier work of Lawrence and Zagier \cite{gppv}. This was recently generalized to an an infinite family of two-variable $(t,q)$-series invariants by Akhmechet, Johnson, and Krushkal (AJK) \cite{ajk}. We calculate one such series for all Seifert manifolds with $b_{1}=0.$ These results extend a previous theorem of \cite{LM} to any number of exceptional fibers and the Reduction Theorem of \cite{GKS} to the two-variable case. As a consequence, a previous result of \cite{LM} on modularity properties and radial limits is enhanced to a larger family of manifolds. We also calculate the infinite collection of $(t,q)$-series invariants for three infinite families of manifolds, finding mixed modularity properties for one such family.

\end{abstract}

\author{Louisa Liles}

\address{Department of Mathematics, University of Virginia, Charlottesville VA 22903}
\email{lml2tb@virginia.edu}

\maketitle

\section{Introduction and statement of main results}

Lawrence and Zagier pioneered a $q$-series which defined a holomorphic function on the unit disk, unified the Witten-Reshetikhin-Turaev (WRT) invariants of the Poincar\'e homology sphere, and was one of the first key examples of a \textit{quantum modular form}, a term coined by Zagier with this series in mind \cite{lawrencezagier, zagier}. Gukov, Pei, Putrov, and Vafa extended this series to a topological invariant of negative definite plumbed $3$-manifolds with spin$^{c}$ structures called the $\Zhat$ series. $\Zhat$ enjoys properties of Lawrence and Zagier's initial series, as it has been shown to unify WRT invariants of all manifolds on which it is defined \cite{gppv, gpp17, hikami, lawrencezagier, murakami}, and shown to have quantum modularity properties for some infinite families of manifolds \cite{bringmannquantum1, bringmannquantum2, gukovmodularity, hikami, lawrencezagier}. 

Recently this invariant has been generalized to an infinite collection of $(t,q)$-series invariants which provides a common refinement of $\Zhat$  and N\'emethi's theory of Lattice cohomology \cite{ajk, nemethi}. One invariant in this collection, called $\Zhathat(t,q),$ recovers the $\Zhat$ series when $t=1.$ This invariant was calculated for Brieskorn homology spheres in \cite{LM}, and $\Zhat$ was recently calculated for Seifert manifolds with $b_{1}=0$ in \cite{GKS}. 
The following result extends the calculation of \cite{LM} to Seifert manifolds with any number of fibers, and the result of \cite[Theorem 4.1]{GKS} to the two-variable setting. Note all manifolds are assumed to be negative definite (see section \ref{subsec: negdef}) with $b_{1}=0$. 
\begin{thm}\label{thm: nstar main}  Let $M(b;(a_{1},b_{1}),\dots,(a_{k},b_{k}))$ be a Seifert manifold, and let $|H|=|H_{1}(M;z)|$. Then:
    \[\Zhathat_{0}(M)(t^{2},q^{|H|})=q^{\Delta}\mathcal{L}_{A}(\text{s.e.}(f_{0}(t,z))).\] \end{thm}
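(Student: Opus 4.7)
The plan is to parallel the one-variable argument of \cite[Theorem 4.1]{GKS} for Seifert manifolds while incorporating the $t$-variable bookkeeping developed in \cite{LM} for the $k=3$ case. One begins from the definition of $\Zhathat_{0}$ given in \cite{ajk} applied to the star-shaped plumbing graph $\Gamma$ of $M(b;(a_{1},b_{1}),\dots,(a_{k},b_{k}))$, which expresses the invariant as a multivariable residue (equivalently, a constant-term extraction) in variables $z_{v}$ attached to the vertices $v$ of $\Gamma$, with each vertex contributing a factor $(z_{v}-z_{v}^{-1})^{2-\deg(v)}$ (twisted by $t$ where the $\Zhathat$ refinement prescribes) and the adjacency matrix of $\Gamma$ entering the quadratic exponent.

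First, I would eliminate the variables on the legs of $\Gamma$. Since every non-central vertex has degree at most two, the Weyl-type factors $(z_{v}-z_{v}^{-1})^{2-\deg(v)}$ are either trivial or linear, and the corresponding iterated residues along each linear chain can be performed explicitly using the chain formulas of \cite{GKS}. After this reduction one is left with a single-variable residue in $z$ whose integrand is precisely $f_{0}(t,z)$, a product of contributions from the $k$ legs together with the $t$-twisted central factor $(z-z^{-1})^{2-k}$.

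Second, I would reinterpret this residue via the symmetric expansion and the Laplace-type transform $\mathcal{L}_{A}$. The symmetric expansion $\text{s.e.}(f_{0}(t,z))$ averages the Laurent developments of $f_{0}$ in the annuli $|z|<1$ and $|z|>1$; applying $\mathcal{L}_{A}$ then extracts the coefficients of monomials $z^{n}$ weighted by the exponential of the quadratic form determined by the inverse linking matrix of $\Gamma$. The rescaling $q\mapsto q^{|H|}$ in the claim is exactly the normalization needed to clear the $1/|H|$ denominators in that quadratic form, and the prefactor $q^{\Delta}$ absorbs the Chern--Simons-type correction built into the definition of $\Zhathat$.

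The main obstacle will be controlling the $t$-expansion of the central factor $(z-z^{-1})^{2-k}$ for general $k$: in the $k=3$ case treated in \cite{LM} the factor is a simple pole and the symmetric expansion is essentially elementary, but for $k\geq 4$ there is a pole of order $k-2$, and one must check that the $t$-twisted expansion conventions of \cite{ajk} produce the coefficient pattern that $\mathcal{L}_{A}$ is designed to transform. Once this compatibility is verified---by matching Laurent coefficients leg by leg against the one-variable calculation in \cite{GKS}---the identification of the residue with $\mathcal{L}_{A}(\text{s.e.}(f_{0}(t,z)))$ follows, yielding the claimed formula.
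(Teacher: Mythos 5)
Your outline rests on a contour-integral/residue formulation of $\Zhathat(t,q)$ that is not actually available: the AJK invariant is defined as a lattice sum $\sum_{x}W_{\Gamma,k}(x)\,q^{\varepsilon_{k}(x)}t^{\Theta_{k}+\langle x,u\rangle}$, not as a multivariable residue, and the bridge to Laurent expansions is the specific design of the family $\widehat{W}$, whose values $\widehat{W}_{d}(n)$ are by construction the coefficients of the symmetric expansion of $(z-z^{-1})^{2-d}$. For this reason the paper never integrates out leg variables at all: after Lemma \ref{lem: shortcut}, admissibility forces $\ell_{i}=0$ at joints and $\ell_{i}=\pm1$ at leaves, so the sum runs only over node and leaf coordinates, and the legs enter solely through the quadratic form. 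The ``chain formulas of \cite{GKS}'' you invoke for iterated residues are not what that paper supplies; what is actually used is the relation between $M'=(-1)^{s}\operatorname{adj}M$ and the splice diagram, which evaluates $\ell^{t}M'\ell$ in terms of $A$, $\overline{a_{i}}$, $\overline{a_{ij}}$ and $\operatorname{adj}M_{ii}$ as in \eqref{eq: zhathat powers}. If you insist on a residue route, you would first have to prove a $t$-refined contour formula for $P^{\infty}_{\widehat{W}}$ and then $t$-refined chain-elimination identities; neither exists in \cite{ajk} or \cite{GKS}, and your sketch assumes both away.

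Second, the step that actually makes the theorem true is absent from your outline. After the Laplace transform, the exponent (times $4$) produced by $\text{s.e.}(f_{0})$ is $(A\ell_{0}+\sum_{i\geq1}\overline{a_{i}}\ell_{i})^{2}/A$, which differs from the invariant's exponent $\ell^{t}M'\ell$ precisely in the diagonal leaf terms, $\ell_{i}^{2}\operatorname{adj}M_{ii}$ versus $\ell_{i}^{2}\overline{a_{ii}}$. This discrepancy is a constant---hence absorbable into $q^{\Delta}$---only because $\ell_{i}^{2}=1$ at every leaf; together with the symmetry $C(\ell)=C(-\ell)$, which pairs $t^{\ell\cdot u}$ with $t^{-\ell\cdot u}$, this is what yields the stated identity and explains why the answer depends only on $a_{1},\dots,a_{k}$ up to the prefactor. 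By contrast, the obstacle you single out (the order-$(k-2)$ pole at the node for $k\geq4$) is handled immediately by treating $z^{A}t^{-1}$ as a single variable $y$ and expanding $(y-y^{-1})^{2-k}$; the genuine difficulty is the diagonal correction and prefactor bookkeeping, which your proposal relegates to a vague ``Chern--Simons-type correction.''
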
 
The full statement of the above appears as Theorem \ref{thm: nstar main full}, and some definitions are postponed until then. The key takeaway is that
$\Zhathat_{0}(M)(t^{2},q^{|H|})$ is a rational power of $q$, multiplied by a Laplace transform $\mathcal{L}_{A}$ of a formal power series given by a \textit{symmetric expansion} (as in \cite{GKS}) of a rational function in two variables $f_{0}(t,z)$. In this sense Theorem \ref{thm: nstar main} is a two-variable analogue of the Reduction Theorem (see \cite[Theorem $4.1$]{GKS}), which establishes $\Zhat(q)$ as the Laplace transform of a symmetric expansion of a function $f_{0}(z)=f_{0}(1,z)$, multiplied by the same rational power of $q$.

We will continue to use $\Zhathat(M)(t,q)$ to denote the invariant associated to a manifold $M$, but when the manifold is clear from context we will omit it from the expression.

From the calculation in Theorem \ref{thm: nstar main}, modularity properties can be established for Seifert manifolds with three exceptional fibers.

\begin{thm}\label{thm: nstar modularity} Let $M=M(b;(a_1,b_1),(a_2,b_2),(a_3,b_3))$ and fix $\omega$ to be $2j$th a root of unity. Then ${\mathcal{L}}_{A}(\text{s.e.}(f_{0}(\omega,z)))$ is, up to normalization, a quantum modular form of weight $\frac{1}{2}$ with respect to the subgroup $\Gamma(4a_{1}a_{2}a_{3}j^{2})\subset \text{SL}(2,\mathbb{Z}).$ 

\end{thm}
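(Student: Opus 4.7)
The plan is to reduce the problem to the classical quantum modularity of partial theta series with odd periodic coefficients. By Theorem~\ref{thm: nstar main}, the function ${\mathcal{L}}_{A}(\text{s.e.}(f_{0}(\omega,z)))$ differs from $\Zhathat_{0}(M)(\omega^{2}, q^{|H|})$ only by the prefactor $q^{\Delta}$, so it suffices to establish quantum modularity of the latter $q$-series in $q$. First I would specialize the rational function $f_{0}(t,z)$ produced by Theorem~\ref{thm: nstar main} at $t=\omega$. For three exceptional fibers the resulting $f_{0}(\omega,z)$ is a rational function of $z$ whose poles lie at roots of unity of orders dividing $2j\,a_{1}a_{2}a_{3}$; a partial-fraction decomposition then writes it as a finite sum of simple terms $c_{\zeta}/(z-\zeta)$ together with a Laurent-polynomial part.

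Next I would compute the symmetric expansion and the Laplace transform ${\mathcal{L}}_{A}$ of each piece separately, following the strategy used in the $t=1$ setting of \cite{GKS} and its two-variable specialization for Brieskorn spheres in \cite{LM}. For a simple pole at a root of unity $\zeta$, the symmetric expansion yields a two-sided Laurent series, and ${\mathcal{L}}_{A}$ converts it into a partial theta series of the form $\sum_{n\ge 0}\psi_{\zeta}(n)\,q^{n^{2}/(4 a_{1} a_{2} a_{3} j^{2})}$, where $\psi_{\zeta}\colon \Z \to \C$ is an odd periodic function of period dividing $2 a_{1} a_{2} a_{3} j$. Summing over the poles presents ${\mathcal{L}}_{A}(\text{s.e.}(f_{0}(\omega,z)))$, up to a polynomial remainder, as a finite $\C$-linear combination of such partial theta series. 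Each such series is, by the classical results of Zagier and Lawrence--Zagier \cite{zagier, lawrencezagier}, a weight-$\frac{1}{2}$ quantum modular form for the principal congruence subgroup $\Gamma(4 a_{1} a_{2} a_{3} j^{2})$. Since a finite $\C$-linear combination of quantum modular forms of weight $\tfrac12$ on a common subgroup is again quantum modular of the same weight on that subgroup, the theorem follows after absorbing the $q^{\Delta}$ prefactor into the normalization.

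\textbf{Main obstacle.} The principal difficulty is bookkeeping at the level of congruence subgroups. One must verify: (i) that the partial-fraction decomposition of $f_{0}(\omega,z)$ produces only simple poles at the expected roots of unity, so no double-pole contributions enter; (ii) that each periodic datum $\psi_{\zeta}$ produced by the Laplace transform is genuinely odd and has period dividing $2 a_{1} a_{2} a_{3} j$, which is what forces the level to be $4 a_{1} a_{2} a_{3} j^{2}$ and not larger; and (iii) that the $q^{\Delta}$ prefactor does not enlarge the level of the modular transformation beyond $\Gamma(4 a_{1} a_{2} a_{3} j^{2})$. Of these, (ii) is the genuinely new step relative to \cite{GKS}, since one must track how the $2j$th root of unity $\omega$ propagates through the symmetric expansion and contributes the extra factor of $j$ inside the character's period and of $j^{2}$ inside the level; this is where the present argument goes beyond the $t=1$ case.
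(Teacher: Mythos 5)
Your proposal is correct in outline, but it takes a genuinely different route from the paper. The paper does not pass through a partial-fraction decomposition at all: it expands $f_{0}(\omega,z)$ directly as a sum indexed by $(\varepsilon_{1},\varepsilon_{2},\varepsilon_{3},m)$ with $\varepsilon_{i}=\pm1$ and $m$ odd, records the explicit coefficient $C(\varepsilon_{1},\varepsilon_{2},\varepsilon_{3},m)$, and uses the two structural symmetries $C(\varepsilon_{1},\varepsilon_{2},\varepsilon_{3},m)=-C(-\varepsilon_{1},-\varepsilon_{2},-\varepsilon_{3},-m)$ and (for $t=\omega$ a $2j$th root of unity) $2j$-periodicity in $m$ to re-index over $n=\sum_{i}\varepsilon_{i}\overline{a_{i}}+mA$; after a finite correction coming from the discrepancy between the index sets $X$ and $Y$ (which produces the polynomial $p(q)$), this packages the whole series as a single partial theta function $\sum_{n\geq 0}\mathcal{C}(n;\omega)q^{n^{2}/4A}$ with $\mathcal{C}$ odd and $2Aj$-periodic, and then the quantum modularity on $\Gamma(4Aj^{2})$ is quoted from \cite[Section 6.1]{LM}. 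Your route instead decomposes $f_{0}(\omega,z)$ into simple-pole pieces $c_{\zeta}/(z-\zeta)$ at the $2A$th roots of $\omega^{2}$; each piece indeed yields, after symmetric expansion and $\mathcal{L}_{A}$, an odd periodic coefficient function $\tfrac{c_{\zeta}}{2\zeta}(\zeta^{n}-\zeta^{-n})$ of period dividing $2Aj$, so the same classical input applies pole by pole. What the paper's packaging buys is (a) a single explicit periodic function $\mathcal{C}(n;\omega)$, which is reused later for radial limits and for the companion even function $\mathcal{D}(n;\omega)$ in Section \ref{sec: modularity}, and (b) no need to compute residues or to reconcile the multiplier systems of several partial theta series: in your version, asserting that the finite sum over poles is again \emph{a} quantum modular form (rather than a sum of such with different multipliers) requires checking that the multipliers coincide on $\Gamma(4Aj^{2})$ — the same issue the paper confronts for the modular piece by restricting from $\Gamma_{1}(4Aj)$ to $\Gamma(4Aj)$, where the $k_{i}$-dependence of the multiplier disappears. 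That check, which you flag in item (ii) of your obstacles, is the only substantive step your outline leaves open; carried out, your argument is a legitimate alternative proof. Also note that the initial reduction via Theorem \ref{thm: nstar main} is unnecessary, since the statement concerns $\mathcal{L}_{A}(\text{s.e.}(f_{0}(\omega,z)))$ directly, though this is harmless.
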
 

The $\Zhathat(t,q)$ invariant is one instance in an infinite collection of two-variable series invariants developed in \cite{ajk}. This collection, which will be denoted $P_{W}^{\infty}(t,q)$, is indexed by \textit{admissible families} of functions $W$ \cite[Section $4$]{ajk}. For Seifert manifolds with three exceptional fibers, the role of $W$ is illuminated by the following theorem: 
\begin{thm}\label{thm: 3star main}
    Fix an admissible family $W$ and a $3$-manifold $M(b;(a_1,b_1),(a_2,b_2),(a_3,b_3)).$ Let $H=|H_{1}(M;\Z)|$. Then \begin{equation}\label{eq: blah} P^{\infty}_{W,[0]}(M)(t^{2},q^{|H|})=q^{\Delta}(\mathcal{L}_{A}(\text{a.e.}_{W_{3}(1)}(f_0(t,z))).\end{equation}
     \end{thm}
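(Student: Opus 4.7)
The plan is to mirror the proof strategy of Theorem \ref{thm: nstar main}, adapting it to accommodate the general admissible family $W$ in place of the data that produces $\Zhathat$. First I would unpack the definition of $P^{\infty}_{W,[0]}(M)(t,q)$ from \cite[Section 4]{ajk} applied to the negative-definite star-shaped plumbing graph for $M(b;(a_1,b_1),(a_2,b_2),(a_3,b_3))$. This invariant is a sum over lattice points (equivalently, a constant-term-type extraction from a product of rational functions at each vertex) in which $W$ enters as a $t$-dependent weight attached to vertices of valence at least three. For a 3-star graph only the central vertex is affected, and it receives the weight prescribed by $W_3$ evaluated at the framing coefficient $1$, i.e.\ $W_3(1)$.

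Next I would process the three legs of the plumbing exactly as in Theorem \ref{thm: nstar main}. Each leg consists of valence-$2$ vertices, which carry no $W$-dependence, so their contribution can be telescoped into the very same rational function $f_0(t,z)$ in the central-vertex variable $z$ that appears in Theorem \ref{thm: nstar main}. The only difference from the $\Zhathat$ computation lies in how $f_0(t,z)$ is expanded as a formal Laurent series in $z$ before the Laplace transform $\mathcal{L}_A$ is applied: the symmetric expansion $\text{s.e.}$, which came from the symmetric weighting at a valence-$3$ vertex used to define $\Zhathat$, gets replaced by the $W$-prescribed admissible expansion $\text{a.e.}_{W_3(1)}$. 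I would then apply $\mathcal{L}_A$ and match the prefactor $q^{\Delta}$ exactly as in the one-variable Reduction Theorem of \cite{GKS} and its extension in Theorem \ref{thm: nstar main}, since the normalization (framing sum, Euler-characteristic correction, and the rescaling $q\mapsto q^{|H|}$) is independent of $W$.

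The main obstacle is the precise matching between the $W_3(1)$-weighting at the central vertex in the plumbing formula and the admissible expansion $\text{a.e.}_{W_3(1)}$ of $f_0(t,z)$. In the $\Zhathat$ case this matching is the content of the symmetric expansion step; for general admissible $W$ one must verify that the definition of $\text{a.e.}_{W_3(1)}$ in \cite{ajk} is precisely the formal-series expansion forced by partial summation over the lattice coordinates on the three legs, once the central-vertex weight $W_3(1)$ is fixed. I expect this to reduce to a direct combinatorial identity at the central vertex, essentially a bookkeeping check that the admissible-family axioms of \cite[Section 4]{ajk} guarantee convergence of the partial sums in the correct half-planes so that the resulting formal expansion is unambiguous.

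Once this identification is in place, equation \eqref{eq: blah} follows by combining the leg computation, the central-vertex expansion, and the Laplace transform step, with the specialization $W_3(1)$ reproducing the symmetric expansion (and hence Theorem \ref{thm: nstar main}) in the case where $P^{\infty}_W$ specializes to $\Zhathat$. This gives the result uniformly across all admissible $W$.
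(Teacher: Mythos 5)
Your overall architecture --- reduce to the lattice sum of Lemma \ref{lem: shortcut}, process the star-shaped graph as in Theorem \ref{thm: nstar main full}, and match the prefactor $q^{\Delta}$ and the rescaling $q\mapsto q^{|H|}$ exactly as in \cite{GKS} --- is the same as the paper's, and that part of your plan is fine (and indeed independent of $W$). The gap is that the one step where the general admissible family actually enters, which you defer as a ``bookkeeping check,'' is the substance of the theorem, and the mechanism you sketch for it is not the right one. The asymmetric expansion $\text{a.e.}_{W_3(1)}$ is not a notion from \cite{ajk} that one matches against; it is Definition \ref{def: ae} of this paper, the $(1-y,\,y)$-weighted combination of the two Laurent expansions of $f_0$ with $y=W_3(1)$. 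Moreover the node is not weighted by ``$W_3$ evaluated at the framing coefficient $1$'': in the sum of Lemma \ref{lem: shortcut} the node contributes $W_3(\ell_0)$ where $\ell_0$ ranges over \emph{all} odd integers, so a priori infinitely many values $W_3(m)$ could enter.

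What actually makes the single parameter $W_3(1)$ appear is the structural consequence of the admissibility axioms of \cite{ajk}: $W_3(m)=W_3(1)$ for all odd $m\geq 1$, $W_3(m)=W_3(1)-1$ for all odd $m\leq -1$, and $W_3(m)=0$ for even $m$ (while the leaf values $W_1(\pm1)$ are pinned to give the factors $-\varepsilon_i$). One then splits the lattice sum by the sign of $\ell_0$, re-indexes the negative half by $\ell\mapsto-\ell$, and recognizes the two halves as the Laurent expansions of $f_0(t,z)$ on $|z^At^{-1}|<1$ and $|z^At^{-1}|>1$, weighted by $W_3(1)$ and $1-W_3(1)$ respectively; after applying $\mathcal{L}_A$ this is exactly $\mathcal{L}_A(\text{a.e.}_{W_3(1)}(f_0(t,z)))$, and the remaining discrepancy in the powers of $q$ is absorbed into $q^{\Delta}$ as in Theorem \ref{thm: nstar main full}. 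Without the two-valued dichotomy for $W_3$ on odd integers and the $\ell\mapsto-\ell$ pairing, your argument does not explain why the answer depends on $W$ only through $W_3(1)$, nor why the weight on the opposite expansion is $1-W_3(1)$ rather than an independent quantity such as $W_3(-1)$. Finally, there is no convergence or half-plane issue to verify as you suggest: the identification is a purely algebraic identity of formal power series.
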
  Here, $\text{a.e.}_{W_{3}(1)}$ denotes an \textit{asymmetric expansion} (see Definition \ref{def: ae}). 
 The full statement of this theorem appears as Theorem \ref{thm: 3star series calc full}.    
     \begin{rem} Only when $W_{3}(1)=\frac{1}{2}$ is the expansion symmetric. In this case, the $\Zhathat$ series is recovered. \end{rem}
     
\begin{rem} The rational power $\Delta$ of $q$ in Theorems \ref{thm: nstar main} and \ref{thm: 3star main} is equal to that of \cite{GKS}, and can be obtained from the plumbing data. Interestingly, the authors of \cite{GKS} show that this rational number is $\Delta=6\lambda(M)+s$, where $\lambda(M)$ is the Casson-Walker invariant and $s$ is determined by the plumbing graph. 
\end{rem}
     
     From the calculation in Theorem \ref{thm: 3star main} we obtain a mixed modularity result when $t$ is any root of unity:

\begin{thm}\label{thm: 3star modularity}
    Fix a $3$-manifold $M(b;(a_1,b_1),(a_2,b_2),(a_3,b_3))$, a $2j$th root of unity $\omega$ and an admissible family $W$. The one-variable series $\mathcal{L}_{A}(\text{a.e.}_{W_{3}}(f_{0}(\omega,z))$ is, up to normalization, a sum of quantum-modular and modular forms of weight $\frac{1}{2}$ with respect to $\Gamma(4a_{1}a_{2}a_{3}j^{2})\subset \text{SL}(2,\mathbb{Z}).$ Specifically, \[\mathcal{L}_{A}(\text{a.e.}_{W_{3}(1)}(f_{0}(\omega,z)))=p(q)+\theta_{f}+(\frac{1}{2}-W_{3}(1))\Theta_{f},\] where $p(q)$ is a polynomial, $\theta_{f}$ is a quantum modular form, and $\Theta_{f}$ is a modular form. 
\end{thm}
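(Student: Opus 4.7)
The plan is to combine Theorem \ref{thm: 3star main} with Theorem \ref{thm: nstar modularity} by splitting the asymmetric expansion into a symmetric part (which inherits quantum modularity) and a linear correction (which I expect to be a genuine theta series).

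First, I would apply Theorem \ref{thm: 3star main} with $t=\omega$, which gives that $\mathcal{L}_{A}(\text{a.e.}_{W_{3}(1)}(f_{0}(\omega,z)))$ equals $q^{-\Delta}P_{W,[0]}^{\infty}(M)(\omega^{2},q^{|H|})$, reducing the problem to analyzing this single expression. The next step is to exploit the fact that asymmetric expansion at parameter $\alpha:=W_{3}(1)$ interpolates linearly through the symmetric expansion. Concretely, one expects (from Definition \ref{def: ae}) a decomposition of the form
\[
\text{a.e.}_{\alpha}(f_{0}(\omega,z)) \;=\; \text{s.e.}(f_{0}(\omega,z)) \;+\; \bigl(\tfrac{1}{2}-\alpha\bigr)\, D(f_{0}(\omega,z)),
\]
where $D$ is the difference operator between the two prescriptions for expanding at the poles of $f_{0}$. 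Since $\mathcal{L}_{A}$ is linear, this immediately yields
\[
\mathcal{L}_{A}(\text{a.e.}_{\alpha}(f_{0}(\omega,z))) \;=\; \mathcal{L}_{A}(\text{s.e.}(f_{0}(\omega,z))) \;+\; \bigl(\tfrac{1}{2}-\alpha\bigr)\, \mathcal{L}_{A}(D(f_{0}(\omega,z))).
\]

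The first summand is exactly the object treated in Theorem \ref{thm: nstar modularity}, so up to normalization it is a quantum modular form of weight $1/2$ on $\Gamma(4a_{1}a_{2}a_{3}j^{2})$; this will serve as $\theta_{f}$. For the second summand, I would compute $D(f_{0}(\omega,z))$ explicitly using the partial fraction expression for $f_{0}(t,z)$ from the proof of Theorem \ref{thm: 3star main}. The key point is that the symmetric expansion yields a half-sided (partial) theta series, whereas the difference $D$ supplies precisely the missing half, producing a full two-sided sum. After applying $\mathcal{L}_{A}$ (which essentially passes the exponents $n$ of $z^{n}$ to $q^{n^{2}/N}$ with $N=4a_{1}a_{2}a_{3}$), this full sum becomes a classical unary theta series of weight $1/2$. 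Once $\omega$ is restricted to $2j$-th roots of unity, the periodicity of the coefficients lies modulo $2a_{1}a_{2}a_{3}j$, and standard theory identifies the resulting series as a modular form on $\Gamma(4a_{1}a_{2}a_{3}j^{2})$; this is $\Theta_{f}$. Any finitely many residual terms from the Laplace transform (constant or exceptional contributions from small-index poles) are absorbed into the polynomial $p(q)$.

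The main obstacle is the second summand: verifying that $\mathcal{L}_{A}\circ D$ applied to $f_{0}(\omega,z)$ produces a true (not merely quantum) modular form on the advertised congruence subgroup. This requires pinning down the character of the resulting theta series — tracking how the Seifert parameters $(a_{i},b_{i})$ determine the quadratic form in the exponent, how the substitution $t=\omega$ produces additional periodic coefficients of modulus dividing $2j$, and how these combine so that the level is exactly $4a_{1}a_{2}a_{3}j^{2}$ and matches the level appearing in Theorem \ref{thm: nstar modularity}. With the character identified, weight $1/2$ modularity then follows from Shimura's theta transformation formula. Finally, collecting the three contributions gives the stated decomposition, establishing the mixed modularity claim.
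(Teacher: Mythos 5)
Your proposal follows essentially the same route as the paper: the paper defines the series difference $\text{s.d.}(f)$, observes $\text{a.e.}_{y}(f)=\text{s.e.}(f)+(\tfrac{1}{2}-y)\,\text{s.d.}(f)$, handles the symmetric part by the quantum modularity argument of Section \ref{sec: zhathat modularity}, and shows that $\mathcal{L}_{A}(\text{s.d.}(f_{0}(\omega,z)))$ has even, $2Aj$-periodic coefficients, hence is a genuine weight-$\frac{1}{2}$ theta series (via the cited lemma of Goswami--Osburn, then restricting from $\Gamma_{1}(4Aj)$ to $\Gamma(4Aj)$ to remove the multiplier's dependence on the residue class and rescaling $q\mapsto q^{j}$ to reach $\Gamma(4Aj^{2})$) --- exactly your decomposition, with your appeal to Shimura's theta transformation playing the role of their cited lemma. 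The only minor discrepancy is that the finitely many terms absorbed into $p(q)$ arise in the paper from comparing the two indexing sets $X$ and $Y$ of exponents rather than from ``small-index poles'' of the Laplace transform, but this does not affect the argument.
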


\begin{rem} When $W_{3}(1)=\frac{1}{2}$, the result of Theorem \ref{thm: nstar modularity} is recovered. The novelty of Theorem \ref{thm: 3star modularity} is that any choice of admissible family $W$ yields modularity properties but quantum modularity, on its own, is only achieved with the $\Zhathat$ invariant.

In Section \ref{sec: series calc}, the admissible family $W$ is shown to play a similar role in ``asymmetrizing" the two-variable series associated to $H$-shaped plumbing graphs, and plumbing graphs with one node and four leaves. Explicit formulae are shown in Examples \ref{ex: Hshape} and \ref{ex: 4star}. However the modularity properties of these series are not presently known by the author. Modularity properties of the one-variable $\Zhat$ invariant for these families are established in \cite{bringmannquantum1, bringmannquantum2}. \end{rem}

\begin{question} Is there an analogue of Theorem \ref{thm: 3star modularity} for other families of manifolds? Do certain choices of $W$ and $t$ gives rise to modularity properties?

\end{question}

\begin{question}
    It follows from Theorem \ref{thm: 3star main} and the definition of \textit{asymmetric expansions} in Section \ref{sec: series calc} that admissible families with $W_{3}(1)=1$ or $W_{3}(1)=0$ create edge cases for $M=(b;(a_{1},b_{1}),(a_{2},b_{2}),(a_{3},b_{3}))$, in which only one Laurent expansion is included in the asymmetric expansion. It could be interesting to study these two invariants, which may be easier to compute.
\end{question}
Section \ref{sec: bg} provides an overview of the $(t,q)$-series invariants and proves a lemma that will be used in calculations throughout the paper. In section \ref{sec: nstar main}, the full version of Theorem \ref{thm: nstar main} is stated and proved. In Section \ref{sec: zhathat modularity} the result is used to prove Theorem \ref{thm: nstar modularity}. For the remaining sections, the focus is broadened to the infinite family $P_{W}^{\infty}(t,q).$ Section \ref{sec: series calc} calculates these two-variable series for three infinite families of manifolds, and Section \ref{sec: modularity} uses one of these calculations to prove Theorem \ref{thm: 3star modularity}. The paper concludes with an example calculation in Section \ref{sec: ex calc}.

\section*{Acknowledgements} The author is grateful for the guidance of her advisor Vyacheslav Krushkal and for thoughtful commentary from Eleanor McSpirit. The author also thanks Sergei Gukov, Josef Svoboda, and Lara San Mart\'in Su\'arez for helpful conversations and suggestions. This project was inspired by a conversation with Josef Svoboda about his work with Sergei Gukov and Ludmil Katzarkov on  $\Zhat$ invariants and splice diagrams for Seifert manifolds \cite{GKS}. The author was partially supported by NSF grant DMS-2105467, NSF RTG grant DMS-1839968, and the Roselle-Huneke Award in Mathematics from the Jefferson Scholars Foundation.

\section{Background}\label{sec: bg}
\subsection{Negative definite plumbed $3$-Manifolds}\label{subsec: negdef} 
A plumbed $3$-manifold $M$ is described by a \textit{plumbing graph} $\Gamma$ with integer weights on its vertices. We restrict to the case that $\Gamma$ is a tree. $\Gamma$ has an associated framed link $L(\Gamma) \hookrightarrow S^{3}$ on which one can perform Dehn surgery to obtain a $3$-manifold $M(\Gamma)$. Each vertex of $\Gamma$ corresponds to an unknotted component of $L(\Gamma)$ whose framing is given by the weight of the vertex. Edges of $\Gamma$ correspond to clasps between unknots, as in Figure \ref{fig: dehn}.
\begin{figure}
    \includegraphics[scale=0.5]{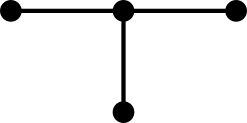}\put(-50,50){$1$}\put(-93,50){$2$}\put(-7,50){$3$}\put(-50,-12
){$5$}\qquad\includegraphics[scale=0.5]{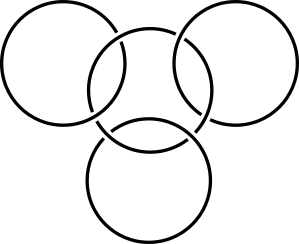}\put(-57,90){$1$}\put(-93,97){$2$}\put(-23,97){$3$}\put(-57,-12){$5$}\caption{A plumbing graph and its associated framed link.}\label{fig: dehn}
\end{figure}

Let $s$ denote the number of vertices of $\Gamma$ and fix an ordering on the vertices. Define $\delta \in \mathbb{Z}^{s}$ so that $\delta_{i}$ is the degree of the $i$th vertex. Define the weight vector $m \in \mathbb{Z}^{s}$ similarly, i.e. $m_{i}$ is the integer weight assigned to $v_{i}$. The $s \times s$ \textit{plumbing matrix} $M$ associated to $\Gamma$ is given by the following: 
\[M_{i,j}=\begin{cases}m_{i} & i=j \\ 1 & i \neq j\text{ and $v_{i}$ and $v_{j}$ are connected by an edge} \\ 0 & \text{otherwise.}\end{cases}\]Note that the letter $M$ refers to both a plumbing matrix and the $3$-manifold it represents. This will be the case throughout the paper. A general $3$-manifold $Y$ is called \textit{negative definite plumbed} if it is homeomorphic to some $M(\Gamma)$ with a negative definite plumbing matrix $M$. Different graphs may represent homeomorphic negative definite plumbed manifolds; in fact this is the case if and only if they are related by a sequence of \textit{Neumann moves} of type (a) and (b) \cite{neumann}.

We follow the conventions of \cite{GKS} to describe and order vertices of $\Gamma$. Specifically, vertices of degree $1$ are called \textit{leaves}, vertices of degree $2$ are called \textit{joints}, and vertices of degree $\geq3$ are called \textit{nodes.} We order vertices so that nodes are first, followed by leaves, followed by joints. 

\subsubsection{Seifert manifolds}
To a Seifert manifold $M=M(b; (a_{1},b_{1}),\dots,(a_{k},b_{k}))$, we associate a plumbing graph with one node of degree $k$ and weight $-b$, $k$ leaves, and some joints connecting the leaves to the node. The $k$ legs of the star-shaped graph are given by the expansions of $\frac{a_{i}}{b_{i}}, 1 \leq i \leq k,$ as continued fractions (see Figure \ref{fig: seifert}.)
    \[\frac{a_{i}}{b_{i}}=c_{1}^{i} - \cfrac{1}{c_{2}^{i}-\cfrac{1}{\ddots -\cfrac{1}{c_{\ell_{i}}^{i}}}}\] 

The Reduction Theorem of \cite{GKS} gave a formula for the $\Zhat$ series of Seifert manifolds that, up to the prefactor $q^{\Delta}$, depends only on $a_{1},\dots, a_{k}$. This is also the case for Theorem \ref{thm: nstar main full} in Section \ref{sec: nstar main}. 
\vspace{0.5cm}
\begin{figure}[H]\includegraphics[scale=0.4]{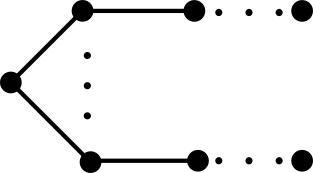}\put(-115,25){$-b$}\put(-90,60){$-c^{1}_{1}$}\put(-55,60){$-c^{1}_{2}$}\put(-12,60){$-c^{1}_{\ell_{1}}$}\put(-12,-13){$-c^{k}_{\ell_{k}}$}\put(-55,-13){$-c^{k}_{2}$}\put(-90,-13){$-c^{k}_{1}$}\caption{A plumbing tree for $M(b;(a_{1},b_{1}),\dots,(a_{k},b_{k}))$ obtained from continued fraction decompositions of $\frac{a_{i}}{b_{i}}$.}\label{fig: seifert}\end{figure}

\subsection{Spin$^{c}$ structures} The $q$ and $(t,q)$-series invariants discussed in this paper are associated to negative definite plumbed $3$-manifolds with spin$^{c}$ structures. For our purposes it suffices to think of spin$^{c}$ structures as \[\text{spin}^{c}(M)\cong \frac{m+2\Z^{s}}{2M\Z^{s}}\cong \frac{\delta+2\Z^{s}}{2M\Z^{s}}\] where the second map is given by $[k] \mapsto [k -(m +\delta)]=[k-Mu]$, where $u:=(1,1,1,\dots).$ More details on spin$^{c}$ structures can be found in \cite[Section $2.2$]{GKS}. Note that the contour integral definition of $\Zhat$ appearing in \cite{gppv, gpp17, GKS} takes $[a] \in \frac{\delta+2\Z^{s}}{2M\Z^{s}}$ as an input, whereas the definition of $\Zhathat$ in \cite{ajk} involves $[k] \in \frac{m+2\Z^{s}}{2M\Z^{s}}.$ 
\subsection{The $(t,q)$-series invariants}

The authors of \cite{ajk} develop an infinite collection of $(t,q)$-series invariants as a common refinement of $\Zhat$ and N\'emethi's theory of Lattice cohomology \cite{nemethi}. These series, denoted $P_{W,[k]}^{\infty}(t,q)$, depend on a choice of \textit{admissible family} $W$  of functions \[\{W_{n}:\mathbb{Z} \to \mathcal{R}\}_{n \in \mathbb{N}}\] for some commutative ring $\mathcal{R}$; axioms for admissibility are given in \cite[Definition $4.1$]{ajk}). Fixing $\mathcal{R}$, infinitely many admissible families give rise to infinitely many two-variable series invariants \cite[Proposition $4.4$]{ajk}. In this paper $\mathcal{R}=\frac{\mathbb{Z}}{2}.$
\begin{defns}[\cite{ajk}]
Fix a negative definite plumbed $3$-manifold $M$, an admissible family $W$, and a spin$^{c}$ representative $k$ for $\displaystyle{[k] \in \frac{m+2\Z^{s}}{2M\Z^{s}}}$. Let $\Gamma$ refer to the plumbing graph of $M$, let $\langle-,-\rangle$ denote the bilinear form given by the plumbing matrix $M$, and let $(\cdot)$ denote the standard Euclidean inner product. For $x \in \mathbb{Z}^{s}$, we define 
\[\chi_{k}(x):=\frac{-k \cdot x + \langle x, x \rangle}{2} \in \Z\] and \[W_{\Gamma,k}(x):=\prod_{v_{i} \in v(\Gamma)}W_{\delta_{i}}((2Mx+k-Mu)_{i}).\] We also define 

\[\Theta_{k}:=\frac{k \cdot u - \langle u, u \rangle}{2}\] and \[\varepsilon_{k}:=-\frac{(k-Mu)^{2}+3s+\sum_{v}m_{v}}{4}+2\chi_{k}(x)+\langle x, u \rangle.\]
\end{defns}
\begin{thm}[\cite{ajk}, Theorem $6.3$ and Remark $6.5$] Fix an admissible family $W$. Given a negative definite plumbed $3$-manifold $M$ equipped with a spin$^{c}$ structure $[k]$, the series \[P_{W,[k]}^{\infty}(M)(t,q):=\sum_{x \in \Z^{s}}W_{\Gamma,k}(x)q^{\varepsilon_{k}(x)}t^{\Theta_{k}+\langle x, u \rangle}\] is a topological invariant of the pair $(M, [k]).$
\end{thm}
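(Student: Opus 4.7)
The plan is to follow the standard strategy for plumbing-graph-defined invariants: by Neumann's theorem (cited at the end of Section~\ref{subsec: negdef}), two negative definite plumbing trees represent homeomorphic $3$-manifolds if and only if they are related by a finite sequence of Neumann moves of types (a) and (b). Invariance of $P_{W,[k]}^{\infty}(M)(t,q)$ therefore reduces to a local calculation: for each such move, verify that the sum defining $P_{W,[k]}^{\infty}$ on the two graphs agree, once one identifies the spin$^{c}$ representatives correctly via the natural map on $(m+2\Z^s)/(2M\Z^s)$ induced by the change of intersection form.

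I would first set up the spin$^{c}$ bookkeeping: when $\Gamma'$ is obtained from $\Gamma$ by blowing up a $(\pm 1)$-framed vertex, the new plumbing matrix $M'$ differs from $M$ by an additional row/column, and one must show that classes $[k'] \in (m'+2\Z^{s+1})/(2M'\Z^{s+1})$ lift uniquely from the corresponding classes $[k] \in (m+2\Z^{s})/(2M\Z^{s})$. Once this is done, the sum over $\Z^{s+1}$ defining $P_{W,[k']}^{\infty}$ factors as an outer sum over $\Z^{s}$ times an inner sum over the newly introduced coordinate $x_{0}$. The contributions to $\chi_{k}$, $\varepsilon_{k}$, and $\Theta_{k}+\langle x,u\rangle$ that involve $x_{0}$ should be computed explicitly as quadratic/linear functions of $x_{0}$ using the form of $M'$, and the weight factor $W_{\Gamma',k'}$ differs from $W_{\Gamma,k}$ only at the blown-up vertex and its neighbor, producing one or two new factors $W_{\delta}(\cdot)$.

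The heart of the proof is then to show that the inner sum over $x_{0}$ collapses to exactly the constant (in $x_{0}$) factor needed to match the pre-factor difference between the two graphs. This is precisely what the admissibility axioms of $W$ in \cite[Definition 4.1]{ajk} are designed to supply: they amount to a telescoping/recursive identity for $W_{n}$ ensuring that summing against the quadratic Gaussian in $x_{0}$ (with the correct $t$-weighting) reproduces the required monomial in $q$ and $t$. Neumann move (b), which does not change the number of vertices, reduces to a reindexing/change-of-variables on the lattice $\Z^{s}$ together with the compatibility of $W_{\Gamma,k}$ under relabeling of incident degrees.

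The main obstacle I expect is the careful bookkeeping of $t$-exponents and the constant shift $-(k-Mu)^{2}+3s+\sum_{v}m_{v}$ in $\varepsilon_{k}$: these are the only pieces whose behavior under blow-up is not purely local at the new vertex, since they involve global data like $s$ and $k-Mu$. Verifying that the shifts on the two sides cancel is the kind of computation that is routine but genuinely tedious; showing that the $t$-weighting (the new feature over $\widehat{Z}$) is consumed exactly by the admissibility axioms is the conceptual step that justifies the definition of an admissible family in the first place, and is where I expect the argument of \cite{ajk} to concentrate its effort.
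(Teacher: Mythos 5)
This theorem is not proved in the present paper at all: it is quoted from \cite{ajk} (Theorem 6.3 and Remark 6.5), so there is no internal argument to compare against, and your outline should be measured against the proof in that reference. Your strategy is indeed the one used there: reduce invariance to Neumann moves via Neumann's theorem, match spin$^{c}$ representatives across the move through the identification of $(m+2\Z^{s})/(2M\Z^{s})$ with $(m'+2\Z^{s+1})/(2M'\Z^{s+1})$, and show that the sum over the coordinate of the new vertex is absorbed into the prefactor precisely because of the admissibility axioms on $W$ --- which, as you say, is the conceptual point the axioms are designed for.

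One correction to your last paragraph: in the convention of this paper and of \cite{ajk}, both type (a) and type (b) moves are blow-ups and both change the number of vertices. Move (a) attaches a new $(-1)$-framed leaf to a vertex, so the degree of that vertex increases and its weight function changes from $W_{\delta}$ to $W_{\delta+1}$, while move (b) inserts a new $(-1)$-framed degree-two vertex into an edge, leaving the neighbors' degrees unchanged but introducing a factor $W_{2}$ at the new vertex. So move (b) is not a mere reindexing of $\Z^{s}$; it requires the same kind of inner-sum computation as move (a), and the two moves exercise different parts of the admissibility axioms (the normalization of $W_{2}$ for (b), the relation between $W_{\delta}$ and $W_{\delta+1}$ for (a)). With that adjustment, your plan is the standard one and matches the cited proof.
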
 A particular family $W=\widehat{W}$ gives rise to the series \[\Zhathat_{[k]}(t,q):=P_{\widehat{W},[k]}^{\infty},\] for which \[\Zhathat_{[k]}(1,q)=\Zhat_{[k-Mu]}(q).\]
The family $\widehat{W}$ is constructed in \cite[Definition $7.1$]{ajk} so that $\widehat{W}_{d}(n)$ captures the coefficient on $z^{-n}$ in the \textit{symmetric expansion} of $(z-z^{-1})^{2-d}$.

\begin{defn}\label{def: se} Let $f$ be a holomorphic function with Laurent expansions centered at $z=0$ and $z=\infty$, given by \[\sum_{k}a_{k}z^{k} \text{ and }\sum_{k}b_{k}z^{-k}.\] The symmetric expansion, denoted $\text{s.e.}(f),$ is the formal power series given by \[\text{s.e.}(f):=\frac{1}{2}\sum_{k}a_{k}z^{k} +\frac{1}{2}\sum_{k}b_{k}z^{-k}.\]
\end{defn} The following fact about $\text{s.e.}((z-z^{-1})^{2-d})$ will be used throughout the paper:

\begin{lem}\label{lem: coeff} The coefficient on $z^{-n}$ is nonzero in $\text{s.e.}((z-z^{-1})^{2-d}) \implies n\equiv d\pmod{2}.$
\end{lem}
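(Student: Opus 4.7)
The plan is to give a quick parity argument based on the symmetry of $f(z) := (z-z^{-1})^{2-d}$ under $z \mapsto -z$. First I would observe that $g(z) := z - z^{-1}$ satisfies $g(-z) = -g(z)$, so
\[
f(-z) = (-g(z))^{2-d} = (-1)^{2-d} f(z) = (-1)^{d} f(z).
\]
Thus $f$ is an even function of $z$ when $d$ is even and an odd function of $z$ when $d$ is odd; in either case the only nonzero monomials $z^{k}$ that can appear in any Laurent expansion of $f$ satisfy $k \equiv d \pmod{2}$.

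Next I would apply this observation to the two Laurent expansions used in Definition \ref{def: se}. If $f(z) = \sum_{k} a_{k} z^{k}$ is the expansion centered at $z=0$, then substituting $z \mapsto -z$ and comparing to $(-1)^{d} f(z)$ forces $a_{k}=0$ whenever $k \not\equiv d \pmod{2}$; the identical argument applied to the expansion at $z=\infty$ shows $b_{k}=0$ whenever $k \not\equiv d \pmod{2}$. Since $\text{s.e.}(f)$ is by definition the half-sum of these two expansions, every nonzero coefficient of $\text{s.e.}(f)$ sits in front of a monomial $z^{k}$ with $k \equiv d \pmod{2}$. In particular, if the coefficient of $z^{-n}$ is nonzero then $-n \equiv d \pmod{2}$, which is equivalent to $n \equiv d \pmod{2}$.

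The only substantive point to verify is that $f(z)$ actually admits the two Laurent expansions demanded by Definition \ref{def: se}. When $2-d \geq 0$, $f$ is a Laurent polynomial (both expansions agree with $f$ itself), so there is nothing to check. When $2-d < 0$, I would briefly note that factoring $z - z^{-1} = -z^{-1}(1-z^{2}) = z(1-z^{-2})$ and inverting using the binomial series produces the expansions convergent near $z=0$ and $z=\infty$ respectively, both of which are manifestly supported on exponents of the same parity as $d$. I do not expect any real obstacle here; the lemma is essentially an elementary parity statement packaged for later reference.
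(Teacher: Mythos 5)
Your proof is correct, but the route is different from the paper's: the paper simply writes out the two Laurent expansions explicitly, noting that on $|z|<1$ one has $(z-z^{-1})^{2-d}=\bigl(-\sum_{i\geq 0}z^{2i+1}\bigr)^{d-2}$ and on $|z|>1$ one has $\bigl(\sum_{i\geq 0}z^{-(2i+1)}\bigr)^{d-2}$, so every exponent is a sum of $d-2$ odd integers and hence has the parity of $d$. You instead argue from the symmetry $f(-z)=(-1)^{d}f(z)$ and uniqueness of the Laurent expansion on each ($-z$-invariant) annulus, which kills all coefficients of the wrong parity in both expansions at once. Your symmetry argument is a bit more conceptual and would apply verbatim to any function odd or even under $z\mapsto -z$, without needing the explicit form of the series; the paper's computation is blunter but has the side benefit of exhibiting the expansions themselves, which are reused later (e.g.\ in the proofs of Theorems \ref{thm: nstar main full} and \ref{thm: 3star series calc full}). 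Note also that your closing paragraph, where you verify existence of the two expansions by factoring $z-z^{-1}=-z^{-1}(1-z^{2})=z(1-z^{-2})$ and inverting with the binomial series, essentially reproduces the paper's argument, so the two proofs converge there; either half of your write-up would suffice on its own.
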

\begin{proof}
The conclusion follows from the fact that the Laurent expansion of $(z-z^{-1})^{2-d}$ on $|z|<1$ is \[(-\sum_{i\geq 0}z^{2i+1})^{d-2},\] and the expansion on $|z|>1$ is \[(\sum_{i\geq 0}z^{-(2i+1)})^{d-2}.\]
\end{proof}
At times it is helpful to remove spin$^{c}$ dependence of $\Zhat$ by instead studying the sum of the series associated to each spin$^{c}$ structure, denoted $\Zhat_{0}(q).$ Below we introduce the analogue for the two variable series.

\begin{defn}
    Let $M$ be a negative definite plumbed $3$-manifold. Define \[P^{\infty}_{W,0}(M)(t,q):=\sum_{[k] \in \text{spin}^{c}(M)}P^{\infty}_{W,[k]}(M)(t,q).\] 
\end{defn}

The following will be used to calculate $P_{W,0}^{\infty}(t^{2},q)$ throughout the paper. 
\begin{lem}\label{lem: shortcut} Let $M$ be a negative definite plumbed $3$-manifold with $b_{1}=0$. Define $m$, $s$, and $u$ as before. Then we may write 

\[P_{W,0}^{\infty}(M)(t^{2},q)=q^{-\frac{3s+m\cdot u}{4}}\sum_{\ell \in \delta + 2\Z^{s}}(\prod_{i}W_{\delta_{i}}(\ell_{i})))q^{\frac{\ell^{t}M^{-1}\ell}{4}}t^{\sum_{i}\ell_{i}}.\]
\end{lem}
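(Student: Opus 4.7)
The plan is to unfold the double sum defining $P_{W,0}^{\infty}(M)(t^2,q)$, reparametrize pairs (spin$^c$-representative, $x$) as elements of $\delta+2\Z^s$, and then rewrite the summand entirely in terms of the new index $\ell$.

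First I would write, unfolding both sums,
\[
P_{W,0}^{\infty}(M)(t^2, q) \;=\; \sum_{[k] \in \mathrm{spin}^c(M)} \sum_{x \in \Z^s} W_{\Gamma,k}(x)\, q^{\varepsilon_k(x)}\, t^{2(\Theta_k + \langle x, u\rangle)}.
\]
Fixing a set $R$ of coset representatives for $\mathrm{spin}^c(M)=(m+2\Z^s)/(2M\Z^s)$, I would set $\ell := 2Mx + k_0 - Mu$ and observe that $(k_0,x)\in R\times\Z^s \mapsto \ell$ is a bijection onto $\delta+2\Z^s$: the shift $k_0 - Mu$ runs over coset representatives of $(\delta+2\Z^s)/(2M\Z^s)$, using $m-Mu=-\delta$ and $-\delta+2\Z^s=\delta+2\Z^s$, and translating by $2Mx$ for $x\in\Z^s$ then fills out $\delta+2\Z^s$.

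With this change of variables, the three factors of the summand should simplify as follows. Directly from the definitions, $W_{\Gamma,k_0}(x)=\prod_i W_{\delta_i}((2Mx+k_0-Mu)_i)=\prod_i W_{\delta_i}(\ell_i)$. For the $t$-exponent, writing $x=\tfrac12 M^{-1}(\ell-(k_0-Mu))$ gives $\langle x,u\rangle = x^T Mu = \tfrac12 \ell\cdot u - \tfrac12(k_0-Mu)\cdot u$, which combined with $\Theta_{k_0}=\tfrac{k_0\cdot u - u^T Mu}{2}$ telescopes to $\Theta_{k_0}+\langle x,u\rangle=\tfrac12\sum_i\ell_i$; after the $t\mapsto t^2$ substitution this becomes $\sum_i\ell_i$. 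For the $q$-exponent, I would expand $2\chi_{k_0}(x)+\langle x,u\rangle = x^T Mx - x^T(k_0-Mu)$ (from the definition of $\chi$ and the bilinear form), together with the identity
\[
\ell^T M^{-1}\ell \;=\; 4x^T Mx + 4x^T(k_0-Mu) + (k_0-Mu)^T M^{-1}(k_0-Mu),
\]
and combine with the prefactor $-(k_0-Mu)^2/4$ inside $\varepsilon_{k_0}$. The resulting cancellations should yield $\varepsilon_{k_0}(x)=-\tfrac{3s+m\cdot u}{4}+\tfrac{\ell^T M^{-1}\ell}{4}$, depending on $x$ only through $\ell$.

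Once these three identifications are in place, the $\ell$-independent factor $q^{-(3s+m\cdot u)/4}$ pulls out of the sum, and the remaining sum over $\ell\in\delta+2\Z^s$ matches the right-hand side of the lemma. The main bookkeeping burden is the third identification: the quadratic-form algebra that links $(k_0-Mu)^2$, the cross terms $x^T(k_0-Mu)$, and the $x^T Mx$ piece of $\chi_{k_0}$ into the single quadratic $\ell^T M^{-1}\ell$ in the new variable. Steps (i) and (ii) are essentially direct substitutions, and the bijection between $R\times\Z^s$ and $\delta+2\Z^s$ is set-theoretic.
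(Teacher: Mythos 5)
Your strategy is the same as the paper's: the paper also rewrites $P^{\infty}_{W,[k]}(t^{2},q)$ in theta form with $\ell=k-Mu+2Mx$ (citing \cite{LM}, Section 3, for precisely the exponent computation you sketch), checks $2(\Theta_{k}+\langle x,u\rangle)=\ell\cdot u$ and $W_{\Gamma,k}(x)=\prod_{i}W_{\delta_{i}}(\ell_{i})$, and then re-indexes the double sum over spin$^{c}$ classes and $x\in\Z^{s}$ as a single sum over $\ell\in\delta+2\Z^{s}$ via the bijection $[k]\mapsto[k-Mu]$. Your explicit bijection $(k_{0},x)\mapsto\ell=2Mx+k_{0}-Mu$ (using $m-Mu=-\delta$ and invertibility of $M$, i.e.\ $b_{1}=0$) is the paper's coset argument made more explicit, and your $t$-exponent and $W$-factor identifications are correct.

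The one step you defer --- ``the resulting cancellations should yield $\varepsilon_{k_{0}}(x)=-\tfrac{3s+m\cdot u}{4}+\tfrac{\ell^{T}M^{-1}\ell}{4}$'' --- is the step that does not close as written. With the definitions as printed here ($\chi_{k}(x)=\tfrac{-k\cdot x+\langle x,x\rangle}{2}$) you correctly get $2\chi_{k_{0}}(x)+\langle x,u\rangle=x^{T}Mx-x\cdot(k_{0}-Mu)$, but $\tfrac14\ell^{T}M^{-1}\ell$ contains the cross term $+x\cdot(k_{0}-Mu)$; since $(k_{0}-Mu)^{2}$ is independent of $x$, no interpretation of that square can absorb an $x$-dependent mismatch of $2x\cdot(k_{0}-Mu)$. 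The computation does close under the conventions actually used in \cite{ajk, LM}: the weight is $\chi_{k}(x)=-\tfrac{k\cdot x+\langle x,x\rangle}{2}$ and $(k-Mu)^{2}$ means $(k-Mu)^{T}M^{-1}(k-Mu)$, and then one finds $\varepsilon_{k}(x)=-\tfrac{3s+m\cdot u}{4}-\tfrac{\ell^{T}M^{-1}\ell}{4}$, i.e.\ the quadratic form enters with a minus sign --- consistent with the later displays in the paper's own proof and with nonnegativity of the $q$-powers for negative definite $M$ (the plus sign in the lemma statement and in the first display of the paper's proof is an internal inconsistency of the paper, not a defect of your plan). So: same approach as the paper, sound in structure, but the quadratic-form bookkeeping you left as an expectation must actually be carried out with the corrected sign conventions, and the resulting exponent is $-\tfrac{\ell^{T}M^{-1}\ell}{4}$ rather than $+\tfrac{\ell^{T}M^{-1}\ell}{4}$.
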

\begin{proof} From the definition in \cite{ajk} and a simple calculation in \cite[Section $3$]{LM} we know that for a spin$^{c}$ structure $[k]$ we have 
\[P_{W,[k]}^{\infty}(t^{2},q)=q^{-\frac{3s+m\cdot u}{4}}\sum_{x \in \mathbb{Z}^{s}}W_{\Gamma,k}(x)q^{\frac{\ell^{t}M^{-1}\ell}{4}}t^{2(\Theta_{k}+\langle x, u \rangle)},\] where $W_{\Gamma,k}(x)$ is given by the admissible family of functions, $\ell=k-Mu+2Mx$, and $\Theta_{k}=\frac{k \cdot u - \langle u, u\rangle}{2}.$

A straightforward calculation shows that \[2(\Theta_{k} + \langle x, u \rangle )= \ell \cdot u=\sum_{i}\ell_{i}.\]

Moreover, $W_{\Gamma,k}(x)=\prod_{i}W_{\delta_{i}}(k-Mu+2Mx)_{i}$. Therefore we may re-index our sum as

\[P_{W,[k]}^{\infty}(t^{2},q)=q^{-\frac{3s+m\cdot u}{4}}\sum_{\ell \in k-Mu+2M\Z^{s}}\prod_{i}(W_{\delta_{i}}(\ell_{i}))q^{-\frac{\ell^{t}M^{-1}\ell}{4}}t^{\ell \cdot u}.\]
 
Now summing over all $[k]\in\frac{m+2\Z^{s}}{2M\Z^{s}}$, we may write 
 
\begin{align*}P_{W,0}^{\infty}(t^{2},q)&=\sum_{[k] \in \text{spin}^{c}(M)}P_{W,[k]}^{\infty}(M)\\
&=\sum_{[k]\in \frac{m + 2\Z^{s}}{2M\Z^{s}}}q^{-\frac{3s+m\cdot u}{4}}\sum_{\ell \in k-Mu+2M\Z^{s}}\prod_{i}(W_{\delta_{i}}(\ell_{i}))q^{-\frac{\ell^{t}M^{-1}\ell}{4}}t^{\ell \cdot u}\\
&=q^{-\frac{3s+m\cdot u}{4}}\sum_{\delta + 2\Z^{s}}\prod_{i}(W_{\delta_{i}}(\ell_{i}))q^{-\frac{\ell^{t}M^{-1}\ell}{4}}t^{\ell \cdot u}.
\end{align*}

The previous equality is true because, for each $[k] \in \frac{m+2\Z^{s}}{2M\Z^{s}}$, we are summing over all possible representatives of its image under the isomorphism \begin{align*} \frac{m+2\Z^{s}}{2M\Z^{s}} &\to \frac{\delta + 2\Z^{s}}{2M\Z^{s}} \\
[k] &\mapsto [k-Mu],\\
\end{align*} and therefore we are taking the sum, over each congruence class of $\frac{\delta + 2\Z^{s}}{2M\Z^{s}},$ of all of its representatives in $\delta + 2\Z^{s}.$ This is equivalent to taking the sum over all $\delta+2\Z^{s}.$

\end{proof}

\begin{rem} $P_{W,[k]}^{\infty}$ may be further simplified by only summing over those $\ell_{i}$ such that \[\ell_{i}=0 \Leftarrow \delta_{i}=2\] and \[\ell_{i}=\pm 1 \Leftarrow \delta_{i}=1.\] These restrictions follow from the definition of admissible families of functions in \cite{ajk}.\end{rem}

\begin{rem}\label{rem: se prod} For a Seifert manifold $M=M(b;(a_1,b_1),\dots(a_k,b_k))$, the quantity \[\prod_{i}(\widehat{W}_{\delta_{i}}(\ell_{i}))\] from \ref{lem: shortcut} describes the the coefficient of $z_{i}^{-\ell_{i}}\dots z_{s}^{-\ell_{s}}$ in \[\text{s.e.}(\prod_{v}(z_{v}-z_{v}^{-1})^{2-\delta_{v}}),\] where one expansion is on $|z_{0}|<1$ and the other is on $|z_{0}|>1$. This is because
\[\text{s.e.}(\prod_{v}(z_{v}-z_{v}^{-1})^{2-\delta_{v}}))=\left[\prod_{1\leq i \leq k}(z_{v_{i}}-z_{v_{i}}^{-1})\right]\text{s.e.}\left[(z_{v_{0}}-z_{v_{0}}^{-1})^{2-k}\right],\] i.e. the symmetric expansion of the product is equal to the product of symmetric expansions. This holds for any manifold whose plumbing graph $\Gamma$ that is ``star-shaped" with one node and $k$ leaves. 
\end{rem}

\subsection{Modular and quantum modular forms} This section provides a brief overview of modular and quantum modular forms of half-integral weight. For more details, we direct the reader to \cite{web, shimura}.

Let $\mathbb{H}$ denote the upper half complex plane. 
Modular forms can be thought of as holomorphic functions $f: \mathbb{H} \to \mathbb{C}$ that behave ``nicely" with respect to the action of $\text{SL}(2,\mathbb{Z})$ on $\mathbb{H}$ by linear fractional transformations. The meaning of ``nicely" is specified in Definition \ref{def: mod} below. For $\gamma = \begin{psmallmatrix} a & b \\ c & d \end{psmallmatrix} \in \text{SL}(2,\mathbb{Z})$, its action on $\tau \in \mathbb{H}$ is given by
\[\gamma\tau := \frac{a \tau + b}{c \tau +d}.\]

Often functions are not modular with respect to the action of $\text{SL}(2,\mathbb{Z}),$ but rather with respect to some subgroup. Two subgroups that appear in the theory of half-integral weight modular forms are

\begin{align*}
\Gamma_1(N)&:= \left\{\begin{pmatrix} a & b \\ c & d
\end{pmatrix} \in \text{SL}(2,\mathbb{Z}) : \ a \equiv d \equiv 1 \bmod{N}, \ c \equiv 0 \bmod{N}\right\}; \\ \Gamma(N)&:= \left\{\begin{pmatrix} a & b \\ c & d
\end{pmatrix} \in \text{SL}(2,\mathbb{Z}) : \ a \equiv d \equiv 1 \bmod{N}, \ b \equiv c \equiv 0 \bmod{N}\right\},
\end{align*} where $N$ is called the \textit{level} of the subgroup. We also define, for $d$ odd, 

\[\varepsilon_d := \begin{cases}
1 & \text{if } d \equiv 1 \bmod{4}; \\
i & \text{if } d \equiv 3 \bmod{4},
\end{cases}
\] and the Petersson slash operator of weight $k \in \frac{\Z}{2}$ is given by \[
f|_k\gamma(\tau):= \begin{cases}
(c\tau + d)^{-k} f(\gamma \tau)& \text{ if } k \in \mathbb{Z}; \\
\varepsilon_d^{2k}\left(\frac{c}{d}\right)(c\tau + d)^{-k}f(\gamma \tau) & \text{ if } k \in \frac{1}{2} + \mathbb{Z},
\end{cases}\] where $(\frac{\cdot}{\cdot})$ denotes the Jacobi symbol.

\begin{defn}\label{def: mod} Let $\Gamma \leq \text{SL}(2,\mathbb{Z})$ be of level $N$ with $4 | N.$ A holomorphic function $f: \mathbb{H} \to \mathbb{C}$ is a \textit{modular (resp. cusp) form} with multiplier $\chi$ and weight $k \in \frac{\mathbb{Z}}{2}$ if 
\begin{enumerate}
\item for all $\gamma \in \Gamma$, $f -\overline{\chi}(\gamma) f|_{k}\gamma=0$, and
\item for all $\gamma \in \emph{SL}(2,\mathbb{Z})$, $(c\tau + d)^{-k}f( \gamma \tau)$ is bounded (resp. vanishes) as $\tau \to i\infty$. 
\end{enumerate}
\end{defn}

Observe that $\text{SL}(2,\mathbb{Z})$ acts transitively on $\mathbb{Q} \cup i\infty.$ If one wanted to define a modular form $f: \mathbb{Q} \to \mathbb{C}$, the transformation law $(1)$ would mandate that the function $f$ is determined by a single pair $(x,f(x)).$ A more interesting object $f: \mathbb{Q} \to \mathbb{C}$ is a \textit{quantum modular form}, which admits some reasonably ``nice" obstruction to modularity. Quantum modular forms of half-integral weight are defined as follows. 

\begin{defn} Let $\Gamma$ have level $4 | N$, and let $\mathcal{Q}$ denote $\mathbb{Q}\backslash S$ for some discrete set $S$ which is closed under the action of $\Gamma$. We define a \textit{quantum modular form} of weight $k$ with multiplier $\chi$ for $\Gamma$ to be a function $f\colon \mathcal{Q} \to \mathbb{C}$ such that for all $\gamma = \begin{psmallmatrix} a & b \\ c & d
\end{psmallmatrix} \in \Gamma$, the functions $h_\gamma\colon \mathcal{Q}\backslash \{\gamma^{-1}(i \infty)\}) \to \mathbb{C}$,
\begin{equation*}h_\gamma(x) := f(x) - \overline{\chi}(\gamma)f|_k \gamma(x)\end{equation*}
extend to some ``nice" function on $\mathbb{R}$. 
\end{defn}
\begin{rem}
    The definition of quantum modularity is still under construction, and ``nice" takes on different meanings in different contexts. The obstructions to modularity in Theorems \ref{thm: nstar modularity} and \ref{thm: 3star modularity} extend to a functions that are $\mathcal{C}^{\infty}$ on $\mathbb{R}$ and real-analytic on $\mathbb{R}\setminus\gamma^{-1}(i\infty).$
\end{rem}

Quantum modularity properties of $q$-series invariants have since been established for several infinite families of manifolds \cite{bringmannquantum1,bringmannquantum2}. The $(t,q)$ series $\Zhathat$ been shown to produce infinitely many quantum modular invariants of Brieskorn spheres via specialization of the $t$ variable to a root of unity $\zeta$ \cite[Theorem $1.2$]{LM}. Radial limits of $\Zhathat(\zeta,q)$ were also shown to exist, and can be thought of as $\zeta$-deformations of WRT invariants \cite[Theorem $1.1$]{LM}. In the notation of this paper, $\zeta=\omega^{2}.$  

\section{Calculation of $\Zhathat$ for Seifert manifolds}\label{sec: nstar main} The purpose of this section is to state and prove the precise formula for the $\Zhathat_{0}(t,q)$ series invariant of Seifert manifolds. We begin with some lemmas about symmetric expansions.

\begin{lem}
    Let $M=M(b;(a_1,b_1),\dots(a_k,b_k))$. For all $\ell \in \delta + 2\Z^{s}$, let $C(\ell)$ denote the coefficient of $\vec{z}^{\vec{\ell}}$ in \[\text{s.e.}(\prod_{v}(z_{v}-z_{v}^{-1})^{2-\delta_{v}}.\] Then \[C(\ell)=C(-\ell).\]
\end{lem}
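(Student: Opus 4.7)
The plan is to use the product decomposition of the symmetric expansion from Remark \ref{rem: se prod} and verify the desired symmetry vertex-by-vertex, tracking signs that ultimately cancel. Write
\[\text{s.e.}\Bigl(\prod_v (z_v-z_v^{-1})^{2-\delta_v}\Bigr) = \Bigl[\prod_{\text{leaves }v}(z_v-z_v^{-1})\Bigr]\cdot \text{s.e.}\bigl((z_{v_0}-z_{v_0}^{-1})^{2-k}\bigr),\]
using that each joint contributes the trivial factor $(z_v-z_v^{-1})^{0}=1$. The coefficient $C(\ell)$ then factors as a product over vertex types, so it suffices to understand the parity of each factor under $\ell_v\mapsto -\ell_v$.

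First I would handle the easy factors. For each joint, the only nonzero coefficient is at $\ell_v=0$, which is trivially symmetric. For each leaf, the only nonzero coefficients are $+1$ at $\ell_v=1$ and $-1$ at $\ell_v=-1$, so the leaf contribution satisfies $C_{\mathrm{leaf}}(-\ell_v)=-C_{\mathrm{leaf}}(\ell_v)$. Since a Seifert plumbing graph with $k$ exceptional fibers has exactly $k$ leaves, the combined leaf factor picks up a sign of $(-1)^{k}$ under total negation.

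The heart of the argument is the node factor. Setting $g(z):=(z-z^{-1})^{2-k}$, I would observe the functional equation
\[g(z^{-1})=(z^{-1}-z)^{2-k}=(-1)^{2-k}g(z)=(-1)^{k}g(z).\]
Writing the two Laurent expansions of $g$ as $\sum a_n z^n$ near $0$ and $\sum b_n z^{-n}$ near $\infty$, substituting $z\mapsto z^{-1}$ swaps the two expansions, so the functional equation forces $a_n=(-1)^{k}b_n$. Plugging into Definition \ref{def: se} gives that the coefficient of $z^{-n}$ in $\text{s.e.}(g)$ equals $(-1)^{k}$ times the coefficient of $z^{n}$, that is, $C_{\mathrm{node}}(-\ell_0)=(-1)^{k}C_{\mathrm{node}}(\ell_0)$. (Lemma \ref{lem: coeff} ensures these are the only potentially nonzero coefficients, so the relation holds for all $\ell_0$.)

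Combining, the sign picked up under $\ell\mapsto -\ell$ is $(-1)^{k}$ from the $k$ leaves times $(-1)^{k}$ from the node, giving $(-1)^{2k}=1$, hence $C(\ell)=C(-\ell)$. I do not expect a serious obstacle: the only subtlety is checking the edge cases $k=1,2$, where $g$ is a Laurent polynomial so its two Laurent expansions coincide, but the relation $a_n=(-1)^{k}b_n$ continues to hold and the argument goes through unchanged. The bookkeeping of which factor contributes which sign is the most error-prone step, but it is purely mechanical once the factored form is in hand.
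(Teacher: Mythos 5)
Your proof is correct and follows essentially the same route as the paper: the paper's argument is exactly the sign count $(-1)^{k}$ from negating the $k$ leaf exponents times $(-1)^{k-2}$ from negating the node exponent, which is the same cancellation you obtain. Your functional-equation justification $g(z^{-1})=(-1)^{k}g(z)$ for the node factor simply makes explicit the step the paper leaves implicit.
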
 
\begin{proof}
    {}Sending $\ell \mapsto -\ell$ multiplies the coefficient by $(-1)^{k}(-1)^{k-2}=1$. The factor of $(-1)^{k}$ comes from sending $\ell_{i}\mapsto -\ell_{i}$ for $1 \leq i \leq k$, and the factor of $(-1)^{k-2}$ comes from sending $\ell_{0}\mapsto -\ell_{0}$.
\end{proof}
\begin{lem} Suppose $M(b;(a_1,b_1),\dots (a_k,b_k))$ is a Seifert manifold. Let $A:=\prod_{i}a_{i}$ and  $\overline{a_{i}}:=\frac{A}{a_{i}}$. Define
    \[f_{0}(z,t):=\frac{(z^{\overline{a_{1}}}t^{-1}-z^{-\overline{a_{1}}}t)\dots((z^{\overline{a_{k}}}t^{-1}-z^{-\overline{a_{k}}}t)}{(z^{A}t^{-1}-z^{-A}t)^{k-2}}\]
    and let $C'(\ell)$ denote the coefficient of $z^{\ell_{0}A+\sum_{i\geq 1}\overline{a_{i}}\ell_{i}}t^{-\ell\cdot u}$ in \[\text{s.e.}(f_{0}(z,t)).\] Then \[C(\ell)=C'(\ell)\]
    
\end{lem}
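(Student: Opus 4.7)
The plan is to realize $f_0(z,t)$ as the image of the multivariate product $\prod_v (z_v - z_v^{-1})^{2-\delta_v}$ under the substitution $z_0 \mapsto z^A t^{-1}$ and $z_i \mapsto z^{\overline{a_i}} t^{-1}$ for each leaf $v_i$, and then to verify that this substitution intertwines the two symmetric expansions and matches $C(\ell)$ with $C'(\ell)$ coefficient by coefficient.

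First I would prune the product. Each joint $v$ has $\delta_v = 2$, so $(z_v - z_v^{-1})^{2-\delta_v} = 1$, and only the node and leaf factors remain: $(z_0 - z_0^{-1})^{2-k}\prod_{i=1}^k(z_i - z_i^{-1})$. A direct computation shows that the substitution sends this rational function to $f_0(z,t)$ exactly. To see that the two symmetric expansions are compatible, I would observe that at $|t| = 1$ the substitution $z_0 = z^A t^{-1}$ identifies $|z_0| < 1$ with $|z| < 1$ and $|z_0| > 1$ with $|z| > 1$, so expanding $(z_0 - z_0^{-1})^{2-k}$ in each region and then substituting agrees with expanding the substituted expression directly in the corresponding $z$-region.

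Writing $\text{s.e.}((z_0-z_0^{-1})^{2-k}) = \sum_n e_n z_0^n$ and $\prod_{i=1}^k(z_i-z_i^{-1}) = \sum_{\epsilon \in \{\pm 1\}^k}\sigma(\epsilon)\prod_{i=1}^k z_i^{\epsilon_i}$ with $\sigma(\epsilon) := \prod_{i=1}^k\epsilon_i$, the multivariate expansion yields $C(\ell) = e_{\ell_0}\,\sigma(\ell_1,\ldots,\ell_k)$ whenever $\ell_i \in \{\pm 1\}$ for every leaf and $\ell_v = 0$ at every joint, and $C(\ell) = 0$ otherwise. Applying the substitution produces
\[\text{s.e.}(f_0(z,t)) = \sum_{n,\,\epsilon} e_n\,\sigma(\epsilon)\,z^{\,nA + \sum_{i=1}^k \overline{a_i}\epsilon_i}\,t^{\,-n - \sum_{i=1}^k \epsilon_i},\]
so $C'(\ell)$ is the sum of $e_n\sigma(\epsilon)$ over all pairs $(n,\epsilon)$ whose exponent pair equals $(\ell_0 A + \sum_{i=1}^k\overline{a_i}\ell_i,\,-\ell\cdot u)$.

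The remaining step is an injectivity statement: the only $(n,\epsilon)$ contributing to this coefficient should be $(\ell_0,(\ell_1,\ldots,\ell_k))$. Setting $\delta_0 = \ell_0 - n$ and $\delta_i = \ell_i - \epsilon_i \in \{-2,0,2\}$, matching the two exponents reduces to the single Diophantine condition $\sum_{i=1}^k \overline{a_i}(a_i-1)\delta_i = 0$; reducing modulo each $a_j$ and using $\overline{a_j}(a_j-1) \equiv -\overline{a_j} \pmod{a_j}$ together with $\gcd(a_j,\overline{a_j}) = 1$ forces each $\delta_j = 0$, provided the $a_i$ are pairwise coprime. I expect this injectivity to be the main obstacle: the congruence argument is immediate in the pairwise coprime setting, but if the $a_i$ share common factors one must either invoke an implicit coprimality consequence of the standing negative-definiteness hypothesis on $M$ or exhibit a more delicate cancellation of the signs $\sigma(\epsilon)$ across colliding index vectors.
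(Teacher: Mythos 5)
Your first two paragraphs reproduce the paper's proof exactly: the published argument consists solely of the observation that the joint factors are trivial and that the substitution $z_0\mapsto z^At^{-1}$, $z_i\mapsto z^{\overline{a_i}}t^{-1}$ carries $\prod_v(z_v-z_v^{-1})^{2-\delta_v}$ to $f_0(z,t)$, compatibly with the two expansion regions; the injectivity of the exponent map $(n,\epsilon)\mapsto\bigl(nA+\sum_i\overline{a_i}\epsilon_i,\,-n-\sum_i\epsilon_i\bigr)$ is not discussed at all.

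The obstacle you flag in your last paragraph is genuine and cannot be removed: the standing hypotheses (Seifert, negative definite, $b_1=0$) do not force the $a_i$ to be pairwise coprime, and when they are not, collisions really occur and the signs $\sigma(\epsilon)$ do not cancel. The paper's own example in Section \ref{sec: ex calc}, $M(2;(2,1),(3,1),(2,1))$ with $a_1=a_3=2$, $\overline{a_1}=\overline{a_3}=6$, $\overline{a_2}=4$, has $(\epsilon_1,\epsilon_2,\epsilon_3)=(1,1,-1)$ and $(-1,1,1)$ landing on the same monomial $z^{12n+4}t^{-(n+1)}$ with the same sign; this is exactly the factor $2$ in the formula for $\mathcal{D}(n;t)$ there. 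So in the non-coprime case the literal coefficient of $z^{\ell_0A+\sum_i\overline{a_i}\ell_i}t^{-\ell\cdot u}$ is $\sum_{\ell'}C(\ell')$ over the whole fiber of the exponent map, and the lemma should be read as identifying the \emph{contribution} of each $\ell$ (equivalently, as stated only when the $a_i$ are pairwise coprime). This looseness is harmless for Theorem \ref{thm: nstar main full}: there both sides are summed over all $\ell$, and colliding $\ell$'s carry the same power of $t$ and the same power of $q$ on the plumbing side as well, since the discrepancy between $\ell^{t}M'\ell$ and $(A\ell_0+\sum_i\overline{a_i}\ell_i)^2/A$ is the constant $\sum_{i\geq1}\ell_i^2(\text{adj}M_{ii}-\overline{a_{ii}})$ with $\ell_i^2=1$; hence the grouped sums agree without any injectivity. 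Finally, a small patch to your coprime argument: when some $a_j=2$ the congruence $\overline{a_j}\delta_j\equiv0\pmod{a_j}$ does not force $\delta_j=0$ (as $\delta_j\in\{-2,0,2\}$), but at most one $a_j$ is even, so once the other $\delta$'s vanish the relation collapses to $\overline{a_j}(a_j-1)\delta_j=0$, which does.
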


\begin{proof} This follows from the fact that $f_{0}(z,t)$ can be obtained from \[\prod_{v}(z_{v}-z_{v}^{-1})^{2-\delta_{v}}\] 
 via the substitution \begin{align*}
    z_{0}&\mapsto z^{A}t^{-1}\\
    z_{i}&\mapsto z^{\overline{a_{i}}}t^{-1}.
\end{align*} 
    
\end{proof}

\begin{cor}\label{cor: cl}
$C'(\ell)=C'(-\ell)).$
\end{cor}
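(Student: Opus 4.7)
The plan is to chain the two immediately preceding lemmas. The first lemma established $C(\ell)=C(-\ell)$ for coefficients in the multivariable symmetric expansion of $\prod_{v}(z_{v}-z_{v}^{-1})^{2-\delta_{v}}$, and the intervening lemma established the identification $C(\ell)=C'(\ell)$ via the substitution $z_{0}\mapsto z^{A}t^{-1}$ and $z_{i}\mapsto z^{\overline{a_{i}}}t^{-1}$ for $1\le i\le k$. Composing these gives
\[
C'(\ell)=C(\ell)=C(-\ell)=C'(-\ell),
\]
which is the claim.

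The only step requiring any thought is to confirm that the substitution is compatible with the negation $\ell\mapsto-\ell$ at the level of the monomials tracked by $C'$. Under the substitution, the $\vec{z}^{\vec{\ell}}$ monomial whose coefficient is $C(\ell)$ becomes the monomial $z^{\ell_{0}A+\sum_{i\ge 1}\overline{a_{i}}\ell_{i}}\,t^{-\ell\cdot u}$, which is precisely the monomial whose coefficient is called $C'(\ell)$. Negating $\ell$ on the left sends this monomial to its inverse on the right, so the indices match up correctly under both operations, and the identification $C(\ell)=C'(\ell)$ of the previous lemma is applied consistently at $\ell$ and at $-\ell$.

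Since this corollary is purely a formal consequence of the two preceding lemmas, there is no genuine obstacle; the only diligence required is the bookkeeping check above that the symmetric expansion and the variable substitution commute with $\ell\mapsto-\ell$. No new estimates, contour shifts, or admissibility conditions are needed.
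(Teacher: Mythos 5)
Your proof is correct and is exactly the route the paper intends: the corollary follows immediately by chaining $C'(\ell)=C(\ell)$ with the symmetry $C(\ell)=C(-\ell)$, and your bookkeeping check that the substitution respects $\ell\mapsto-\ell$ is the only point needing care.
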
 

\begin{thm}\label{thm: nstar main full}
    
    Let $M(b;(a_{1},b_{1}),\dots,(a_{k},b_{k}))$ be a Seifert manifold with  plumbing matrix $M$. Then:
    \[\Zhathat(M)(t^{2},q^{|H|})=q^{\Delta}\mathcal{L}_{A}(\text{s.e.}(f_{0}(t,z)))\] where $\Delta=\Delta(Y)$ is the prefactor as in the proof of \cite[Theorem $4.1$]{GKS}, $|H|=|H_{1}(M;\Z)|$, and $\mathcal{L}_{A}$ is the Laplace transform sending  $z^{n}t^{m}\mapsto q^{\frac{n^{2}}{4A}}t^{m}$.
\end{thm}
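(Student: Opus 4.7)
The plan is to reduce the statement to the one-variable Reduction Theorem of \cite{GKS} by letting the $t$-variable ride along separately from the plumbing quadratic form. First I would apply Lemma~\ref{lem: shortcut} and substitute $q\mapsto q^{|H|}$ to rewrite
\[\Zhathat_{0}(M)(t^{2},q^{|H|})
= q^{-|H|\frac{3s+m\cdot u}{4}}\sum_{\ell \in \delta + 2\Z^{s}} \Bigl(\prod_{i}\widehat{W}_{\delta_{i}}(\ell_{i})\Bigr)\, q^{-\frac{|H|\,\ell^{t}M^{-1}\ell}{4}}\, t^{\ell\cdot u}.\]
By Remark~\ref{rem: se prod}, the product $\prod_{i}\widehat{W}_{\delta_{i}}(\ell_{i})$ equals the coefficient $C(\ell)$ of $\vec z^{\vec\ell}$ in $\text{s.e.}\bigl(\prod_{v}(z_{v}-z_{v}^{-1})^{2-\delta_{v}}\bigr)$, and the two lemmas immediately preceding the theorem identify $C(\ell)=C'(\ell)$, the coefficient of $z^{\ell_{0}A+\sum_{i\geq 1}\overline{a_{i}}\ell_{i}}\,t^{-\ell\cdot u}$ in $\text{s.e.}(f_{0}(z,t))$; this is exactly where the substitution $z_{0}\mapsto z^{A}t^{-1}$, $z_{i}\mapsto z^{\overline{a_{i}}}t^{-1}$ that defines $f_{0}$ packages the $t$-dependence of the series into the $t$-exponents of the symmetric expansion.

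Next, I would unpack the right-hand side. Since $\mathcal{L}_{A}$ sends $z^{\ell_{0}A+\sum\overline{a_{i}}\ell_{i}}\,t^{-\ell\cdot u}$ to $q^{(\ell_{0}A+\sum\overline{a_{i}}\ell_{i})^{2}/(4A)}\,t^{-\ell\cdot u}$, we have
\[q^{\Delta}\mathcal{L}_{A}(\text{s.e.}(f_{0}(t,z)))\;=\;q^{\Delta}\sum_{\ell}C'(\ell)\,q^{(\ell_{0}A+\sum_{i\geq 1}\overline{a_{i}}\ell_{i})^{2}/(4A)}\,t^{-\ell\cdot u}.\]
To align this with our earlier sum I would reindex $\ell\mapsto-\ell$, which preserves $\delta+2\Z^{s}$ and, by Corollary~\ref{cor: cl}, leaves $C'(\ell)$ unchanged while flipping $t^{\ell\cdot u}$ to $t^{-\ell\cdot u}$. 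Matching the two expressions then reduces to the single $t$-independent identity
\[-\frac{|H|\,\ell^{t}M^{-1}\ell}{4} - \frac{|H|(3s+m\cdot u)}{4} \;=\; \frac{(\ell_{0}A+\sum_{i\geq 1}\overline{a_{i}}\ell_{i})^{2}}{4A} + \Delta,\]
which is exactly the quadratic-form identity that underlies \cite[Theorem~4.1]{GKS}.

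The main obstacle is this last identity, which requires an explicit description of the adjugate $|H|\,M^{-1}$ of the Seifert plumbing matrix in terms of $(a_{i},b_{i})$ and $A=\prod a_{i}$, and a recognition of $-|H|\,\ell^{t}M^{-1}\ell/A$, restricted to $\ell$ supporting nonzero $C'(\ell)$, as a rescaled perfect square plus a $\Delta$-contribution. This is the technical heart of the one-variable proof, and I would invoke it directly rather than redo the linear algebra. The reason no new analytic input is needed in the two-variable setting is that the $t$-variable never enters the plumbing quadratic form: it only tracks $\ell\cdot u$, and that tracking has already been absorbed into $f_{0}(t,z)$ by construction. Once the GKS identity is in hand, the chain of rewrites above closes and yields Theorem~\ref{thm: nstar main full}.
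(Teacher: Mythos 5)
Your proposal follows essentially the same route as the paper's proof: reduce via Lemma~\ref{lem: shortcut} and Remark~\ref{rem: se prod}, identify $\prod_i\widehat{W}_{\delta_i}(\ell_i)$ with the coefficients $C(\ell)=C'(\ell)$ of $\text{s.e.}(f_0(t,z))$, use $C'(\ell)=C'(-\ell)$ to handle the sign of the $t$-exponent, and absorb the $t$-independent mismatch of quadratic forms (the $\ell_i^2$-terms, constant on the support since $\ell_i=\pm1$ at leaves) into the prefactor $q^{\Delta}$ exactly as in the Reduction Theorem of \cite{GKS}. The only difference is presentational: the paper writes out both quadratic forms explicitly via the adjugate/splice-diagram formula before citing \cite{GKS} for the correction, while you invoke that identity wholesale, which is legitimate since $\Delta$ is defined by that very computation.
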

\begin{proof} Applying Lemma \ref{lem: shortcut}, we have \[\Zhathat_{0}(M)(t^{2},q)=q^{-\frac{3s + \text{Tr}(M)}{4}}\sum_{\ell \in \delta + 2\Z^{s}}(\prod_{i}\widehat{W}_{\delta_{i}}(\ell_{i})))q^{\frac{\ell^{t}M^{-1}\ell}{4}}t^{\ell \cdot u}\] As mentioned in Remark \ref{rem: se prod} $(\prod_{i}\widehat{W}_{\delta_{i}}(\ell_{i})))$ identifies the coefficient on $\vec{z}^{-\vec{\ell}}$ in the symmetric expansion of $\prod_{v}(z_{v}-z_{v}^{-1})^{2-\delta_{v}}$.
As in the proof of \cite[Theorem $4.1$]{GKS}   we may compute $\Zhathat_{0}(t^{2},q^{|H|})$ by replacing terms of the form \[q^{-\frac{\ell^{t}M^{-1}\ell}{4}}t^{u \cdot \ell}\] with terms of the form \[q^{-\frac{\ell^{t}(-1)^{s}\text{adj}M\ell}{4}}t^{u \cdot \ell},\] where $\text{adj}M$ denotes the adjugate matrix \footnote{The authors of \cite{GKS} make the replacement of $M$ with $\text{adj}(M)$, but this only works when $M$ has positive determinant. When $M$ has negative determinant (and this occurs exactly when $s$ is odd), $\text{adj}(M)$ is no longer negative definite, and the powers of $q$ become negative. Specifically, one would get a series in $q^{-|H|}$. Replacing $\text{adj}M$ with $(-1)^{s}\text{adj}(M)$ fixes this minor issue.}. For notational convenience, we will let \[M':=(-1)^{s}\text{adj}M.\]

Follow the convention in \cite{GKS} that $z_{0}$ is the node, $z_{1},\dots z_{k}$ are the leaves, and the rest of the vertices are joints. 
Exactly as in the proof of \cite[Theorem $4.1$]{GKS}, we evaluate $\frac{\ell^{t}M'\ell}{4}$ for each $\ell$. Let $\overline{a_{ij}}:=\frac{A}{a_{i}a_{j}}$. The exponent on $q$ (multiplied by $4$ for notational convenience) becomes
 \begin{equation}\label{eq: zhathat powers} 4\ell^{t}M'\ell=\ell_{0}^{2}A+2\sum_{i\neq0}\ell_{0}\ell_{i}\overline{a_{i}}+\sum_{i\neq j}\ell_{i}\ell_{j}\overline{a_{ij}}+\sum_{i\neq0}\ell_{i}^{2}\text{adj}M_{ii}.\end{equation} The calculation above is a consequence of the relationship between the adjugate matrix and the \textit{splice diagram} associated to the plumbing graph $\Gamma$; for more details see \cite[Theorems $3.3$ and $4.1$]{GKS}. Applying Corollary \ref{cor: cl}, we get that all terms are of the form $C(\ell)q^{\frac{\ell^{t}M'\ell}{4}}[t^{\ell\cdot u}+t^{-\ell \cdot u}].$

On the other hand, we calculate the symmetric expansion of $f_{0}(z,t)$. We start with
\[\text{s.e.}(f_{0}(t,z))=\left[\prod_{1\leq i \leq k}(z_{v_{i}}^{\overline{a_{i}}}t^{-1}-z_{v_{i}}^{-\overline{a_{i}}}t)\right]\text{s.e.}\left[z_{v_0}^{A}t^{-1}-(z_{v_0}^{A}t^{-1})^{-1})^{2-k}\right].\] We may treat $(z^{A}t^{-1})$ as one variable $y$ and expand as we would $(y-y^{-1})^{2-k}.$ Therefore in the symmetric expansion of $f_{0}(z,t)$ all terms will be of the form
\[C(\ell)z^{A\ell_{0}+\sum_{i\geq 1}\overline{a_{i}}\ell_{i}}t^{-\ell \cdot u}\] Note that for $1 \leq i \leq k, \ell_{i}=\pm 1$. 

After the Laplace transform, the power on $q$ corresponding to each $\pm \ell$, again multiplied by $4$, becomes \begin{equation}\label{eq: laplace powers}(A\ell_{0}+\sum_{i \neq 0}\ell_{i}\overline{a_{i}})^{2}\slash A=\ell_{0}^{2}A+2\sum_{i\neq 0}\ell_{0}\ell_{i}\overline{a_{i}}+\sum_{i \neq j,i,j\geq 1}\ell_{i}\ell_{j}\overline{a_{ij}}+\sum_{i \neq 0}\ell_{i}^{2}\overline{a_{ii}},\end{equation} which we will shorten to $n(\pm \ell).$ Each term of the symmetric expansion is of the form \[C(\ell)q^{\frac{n(\pm\ell)}{4}}[t^{\ell\cdot u} + t^{-\ell \cdot u}].\]

The only difference between the powers of $q$ in equation (\ref{eq: zhathat powers}) and the powers pf $q$ in equation (\ref{eq: laplace powers}) is the coefficient on $\ell_{i}^{2}$ for $i\neq 0$. This can be corrected in the prefactor by following the same process from the proof of \cite[Theorem $4.1$]{GKS}. This is because for all $\ell$ that appear as exponents in the symmetric expansion, $\ell_{i}^{2}=1$ when $1\leq i \leq k.$
\end{proof}

\section{Quantum modularity of $\Zhathat$ for three exceptional fibers}\label{sec: zhathat modularity} 
This section proves the result of Theorem \ref{thm: nstar modularity}. The proof mirrors that of \cite[Section $6.1$]{LM}, so we will not repeat it but instead direct the reader there for more details. Below we establish a key prerequisite for applying the argument in \cite{LM}:
\begin{prop}\label{prop: odd periodic} Let $M=M(b;(a_{1},b_{1}),(a_{2},b_{2}),(a_{3},b_{3}))$ and define $A, \overline{a_{i}}$ as before. Let $\omega$ be a $2j$th root of unity. Then there exists a polynomial $p(q)$ such that
\[\mathcal{L}_{A}(\text{s.e.}(f_{0}(\omega,z)))=\sum_{n\geq 0}\mathcal{C}(n;\omega)q^{\frac{n^{2}}{4A}}+p(q),\] where $\mathcal{C}(n;\omega)$ is odd and periodic in $n$. \end{prop}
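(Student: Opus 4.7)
The plan is to unfold $\mathcal{L}_{A}(\text{s.e.}(f_{0}(\omega,z)))$ explicitly and read off an odd periodic structure. First I would expand the numerator of $f_{0}(\omega,z)$ as the Laurent polynomial
\[
N(z;\omega) \;=\; \sum_{\epsilon\in\{\pm 1\}^{3}}\epsilon_{1}\epsilon_{2}\epsilon_{3}\,\omega^{-(\epsilon_{1}+\epsilon_{2}+\epsilon_{3})}\,z^{N_{\epsilon}},\qquad N_{\epsilon}:=\epsilon_{1}\overline{a_{1}}+\epsilon_{2}\overline{a_{2}}+\epsilon_{3}\overline{a_{3}},
\]
and invert the denominator $z^{A}\omega^{-1}-z^{-A}\omega$ by geometric series on the regions $|z|<1$ and $|z|>1$. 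Averaging the two expansions gives
\[
\text{s.e.}\!\left(\frac{1}{z^{A}\omega^{-1}-z^{-A}\omega}\right) \;=\; -\tfrac{1}{2}\sum_{\ell\in 2\Z+1}\operatorname{sign}(\ell)\,\omega^{-\ell}\,z^{\ell A}.
\]
Multiplying the two series and applying $\mathcal{L}_{A}$ sends each monomial $z^{N_{\epsilon}+\ell A}$ to $q^{(N_{\epsilon}+\ell A)^{2}/(4A)}$, and grouping by the value $n=N_{\epsilon}+\ell A$ expresses the result as $\sum_{n\in\Z}a(n;\omega)\,q^{n^{2}/(4A)}$ with
\[
a(n;\omega) \;=\; -\tfrac{1}{2}\sum_{\epsilon:\,\ell_{\epsilon}(n)\in 2\Z+1}\operatorname{sign}(\ell_{\epsilon}(n))\,\epsilon_{1}\epsilon_{2}\epsilon_{3}\,\omega^{-\sum_{i}\epsilon_{i}-\ell_{\epsilon}(n)},\qquad \ell_{\epsilon}(n):=(n-N_{\epsilon})/A.
\]

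The key observation is that for $|n|>N_{0}:=\overline{a_{1}}+\overline{a_{2}}+\overline{a_{3}}$, every contributing $\ell_{\epsilon}(n)$ has the same sign as $n$, so $a(n;\omega)=-\tfrac{1}{2}\operatorname{sign}(n)\,\tilde{\chi}(n;\omega)$ where
\[
\tilde{\chi}(n;\omega) \;:=\; \sum_{\epsilon:\,\ell_{\epsilon}(n)\in 2\Z+1}\epsilon_{1}\epsilon_{2}\epsilon_{3}\,\omega^{-\sum_{i}\epsilon_{i}-\ell_{\epsilon}(n)}.
\]
Because $\omega^{2j}=1$, the shift $n\mapsto n+2jA$ sends each $\ell_{\epsilon}(n)$ to $\ell_{\epsilon}(n)+2j$, which preserves both its parity and the corresponding power of $\omega$. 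Hence $\tilde{\chi}(\,\cdot\,;\omega)$ is periodic in $n$ of period $2jA$.

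I would then define $\mathcal{C}(n;\omega):=\tfrac{1}{2}\bigl(\tilde{\chi}(-n;\omega)-\tilde{\chi}(n;\omega)\bigr)$ for all $n\in\Z$. This function is odd by construction and periodic of period $2jA$ since $\tilde{\chi}$ is. For each $n>N_{0}$ a direct calculation with the asymptotic formula above gives $a(n;\omega)+a(-n;\omega)=\mathcal{C}(n;\omega)$, so pairing the $\pm n$ contributions to $\mathcal{L}_{A}(\text{s.e.}(f_{0}(\omega,z)))$ shows that the tail over $n>N_{0}$ is exactly $\sum_{n>N_{0}}\mathcal{C}(n;\omega)q^{n^{2}/(4A)}$. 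The remaining indices $0\le n\le N_{0}$ contribute only finitely many monomials in $q^{1/(4A)}$, and the discrepancy between the actual coefficient and $\mathcal{C}(n;\omega)$ on this range collects into the polynomial $p(q)$.

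The main obstacle I anticipate is the careful bookkeeping of $\operatorname{sign}(\ell_{\epsilon}(n))$ in the range $|n|\le N_{0}$, where different $\epsilon$'s can produce $\ell_{\epsilon}(n)$ of opposite signs and thereby break the clean formula $a(n;\omega)=-\tfrac{1}{2}\operatorname{sign}(n)\tilde{\chi}(n;\omega)$. Verifying that $p(q)$ is genuinely a polynomial reduces to the elementary bound $|N_{\epsilon}|\le N_{0}$, but one must also check that the pairing $(\epsilon,\ell)\leftrightarrow(-\epsilon,-\ell)$, which is what makes $\mathcal{C}$ odd, remains compatible with the periodic extension of $\tilde{\chi}$ across the boundary between the ``periodic'' and ``anomalous'' ranges of $n$.
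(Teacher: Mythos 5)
Your proposal is correct and takes essentially the same route as the paper: your pairing of the $\pm n$ coefficients of the two Laurent expansions and your periodic function $\tilde{\chi}$ reproduce exactly the paper's coefficient $C(\varepsilon_{1},\varepsilon_{2},\varepsilon_{3},m)$ and its re-indexing from the set $Y$ (indexed by $m>0$) to the set $X$ (indexed by $n>0$), whose finite symmetric difference is the paper's $p(q)$ and corresponds precisely to your anomalous range $|n|\le N_{0}$. The bookkeeping concern you raise is harmless, since your $\mathcal{C}(n;\omega)$ is defined globally by the periodic formula and the discrepancy is confined to finitely many terms, exactly as in the paper.
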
 From here one can follow the process in \cite[Section $6.1$]{LM} to show that \[\sum_{n \geq 0} \mathcal{C}(n;\omega)q^{\frac{n^{2}}{4A}}\] is a quantum modular form of weight $\frac{1}{2}$ with respect to $\Gamma(4Aj^{2}).$

We now prove Proposition \ref{prop: odd periodic}.
\begin{proof}
 Let $\varepsilon_{i}=\pm 1$, and let $m \in \mathbb{Z}$. Define \[C(\varepsilon_{1},\varepsilon_{2},\varepsilon_{3},m)=\begin{cases} -\frac{1}{2}\varepsilon_{1}\varepsilon_{2}\varepsilon_{3}[t^{\sum_{i}\varepsilon_{i}+m}+t^{-(\sum_{i}\varepsilon_{i}+m)}]& m \text{ odd} \\ 0 & m\text{ even.} \end{cases}\]
The function $C$ has the following symmetry: \[C(\varepsilon_{1},\varepsilon_{2},\varepsilon_{3},m)=-C(-\varepsilon_{1},-\varepsilon_{2},-\varepsilon_{3},-m).\] If $t$ is a $2j$th root of unity $\omega$, we also have that
\[C(\varepsilon_{1},\varepsilon_{2},\varepsilon_{3},m)=C(\varepsilon_{1},\varepsilon_{2},\varepsilon_{3},m+2j).\] Let \[Y:=\{(\varepsilon_{1},\varepsilon_{2},\varepsilon_{3},m)|m>0, m\text{ odd}\};\] \[X=\{(\varepsilon_{1},\varepsilon_{2},\varepsilon_{3},m)|\varepsilon_{i}=\pm 1, m\in \mathbb{Z},m\text{ odd, }\sum_{i}\varepsilon_{i}\overline{a_{i}}+mA>0\}.\] Observe that both $Y-X$ and $X-Y$ are finite sets (which may be empty). Given $(\varepsilon_{1},\varepsilon_{2},\varepsilon_{3},m)$, let $n:=\sum_{i}\varepsilon_{i}\overline{a_{i}}+mA.$
We may now rewrite 
    \[\mathcal{L}_{A}(\text{s.e.}(f_{0}(t,z)))=\sum_{Y}C(\varepsilon_{1},\varepsilon_{2},\varepsilon_{3},m)q^{\frac{n^{2}}{4A}}.\] 
    On the other hand, we may re-index and rewrite \[\sum_{X}C(\varepsilon_{1},\varepsilon_{2},\varepsilon_{3},m)q^{\frac{n^{2}}{4A}}=\sum_{n\geq 0}\mathcal{C} (n;\omega)q^{\frac{n^{2}}{4A}},\] where $\mathcal{C}(n;\omega)q^{\frac{n^{2}}{4A}}$ is $2Aj$-periodic and odd in $n$. Putting everything together we have
    \[\sum_{Y}C(\varepsilon_{1},\varepsilon_{2},\varepsilon_{3},m)q^{\frac{n^{2}}{4A}}=\sum_{X}C(\varepsilon_{1},\varepsilon_{2},\varepsilon_{3},m)q^{\frac{n^{2}}{4A}} + \sum_{Y-X}C(\varepsilon_{1},\varepsilon_{2},\varepsilon_{3},m)q^{\frac{n^{2}}{4A}}-\sum_{X-Y}C(\varepsilon_{1},\varepsilon_{2},\varepsilon_{3},m)q^{\frac{n^{2}}{4A}}.\] 
    As both $X-Y$ and $Y-X$ are finite sets, we may write \[\mathcal{L}_{A}(\text{s.e.}(f_{0}(t,z)))=\sum_{n\geq 0}\mathcal{C}(n;\omega)q^{\frac{n^{2}}{4A}}+p(q),\] where $p(q)$ is a polynomial in $q$. 
\end{proof}

   \begin{rem}
       Because $\mathcal{C}(n;\omega)$ is periodic with mean value zero, one may take radial limits toward roots of unity as in \cite[Section $4.1$]{LM}. Recall that $\zeta=\omega^{2}$ is a $j$th root of unity. For any root of unity $\xi$, $\lim_{t \searrow 0}\Zhathat(\zeta,\xi e^{-t})$ exists and can be expressed as a finite sum in terms of $\mathcal{C}(n;\omega)$. These limits can be thought of as $\zeta$-deformations of Witten-Reshetikhin-Turaev invariants, extending the result of \cite[Theorem $1.1$]{LM}.
       \end{rem}

\section{Calculation of $(t,q)$-series invariants}\label{sec: series calc} This section presents calculations of the $(t,q)$-series invariants $P_{W,0}^{\infty}(t,q)$ in terms of $W$ for three infinite families of manifolds. 
\begin{subsection}{ Seifert Manifolds with $3$ Exceptional Fibers} For this family, the invariants $P_{W,0}^{\infty}$ can be thought of as a one-parameter family coming from \textit{asymmetric expansions}, in which the $\Zhathat$ invariant is realized from the unique symmetric expansion. 

\begin{defn}\label{def: ae}
    Given a function $f(z)\colon \C \to \C$ with Laurent expansions of $\sum_{k}a_{k}z^{k}$ centered at $z=0$ and $\sum_{k}b_{k}z^{-k}$ centered at $z=\infty$, fix $y \in \mathbb{R}$ and define the \textit{asymmetric expansion}, denoted a.e., to be the formal power series given by \[\text{a.e.}_{y}(f):=(1-y)\sum_{k}a_{k}z^{k}+y\sum_{k}b_{k}z^{-k}.\] When $y=\frac{1}{2},$ this becomes the \textit{symmetric expansion} used in \cite{GKS}.
\end{defn} 

\begin{thm}\label{thm: 3star series calc full}
    Given a choice of admissible family $W$ and $M=M(b;((a_1,b_1),(a_2,b_2),(a_3,b_3))$, the two-variable AJK-series can be written as \begin{equation}P^{\infty}_{W,0}(M)(t^{2},q^{|H|})=q^{\Delta}(\mathcal{L}_{A}(\text{a.e.}_{W_{3}(1)}(f_0(t,z))),\end{equation}\label{theorem eq} where $\Delta,A,f_{0}(t,z),$ and $\mathcal{L}_{A}$ are all defined as in Theorem \ref{thm: nstar main}.
\end{thm}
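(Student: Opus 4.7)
The plan is to mirror the proof of Theorem \ref{thm: nstar main full}, replacing the symmetric expansion used for $\widehat{W}$ by the asymmetric expansion governed by a general admissible family $W$. The guiding observation is that for a star-shaped plumbing graph with a single degree-$3$ node, the admissibility axioms pin down $W_1$ and $W_2$ completely, while the remaining freedom in the values of $W_3$ is parameterized by the single scalar $y := W_3(1)$.

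First I would apply Lemma \ref{lem: shortcut} to obtain
\[
P_{W,0}^\infty(M)(t^2, q) = q^{-\frac{3s + m \cdot u}{4}} \sum_{\ell \in \delta + 2\Z^s} \Big(\prod_i W_{\delta_i}(\ell_i)\Big) q^{\frac{\ell^t M^{-1} \ell}{4}} t^{\sum_i \ell_i}.
\]
Using the admissibility axioms of \cite[Definition 4.1]{ajk}, I would then prove the multi-variable analog of Remark \ref{rem: se prod}: the product $\prod_i W_{\delta_i}(\ell_i)$ equals the coefficient of $\vec{z}^{-\vec{\ell}}$ in
\[
\text{a.e.}_{W_3(1)}\Big(\prod_v (z_v - z_v^{-1})^{2 - \delta_v}\Big),
\]
where the asymmetric expansion is taken in the node variable $z_0$ and the remaining factors are Laurent polynomial. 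Degree-$2$ joints force $\ell_j = 0$ with weight $1$, degree-$1$ leaves force $\ell_i = \pm 1$ with weights from $(z_i - z_i^{-1})$, and the degree-$3$ node factor $(z_0 - z_0^{-1})^{-1}$ expands with weight $1 - W_3(1)$ on $|z_0| < 1$ and weight $W_3(1)$ on $|z_0| > 1$.

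I would then carry out the same adjugate-matrix manipulation as in Theorem \ref{thm: nstar main full}: replacing $M^{-1}$ by $M' := (-1)^s \text{adj}(M)$ converts $q$ into $q^{|H|}$, and $\ell^t M' \ell$ is evaluated by Equation (\ref{eq: zhathat powers}). The substitution $z_0 \mapsto z^A t^{-1}$, $z_i \mapsto z^{\overline{a_i}} t^{-1}$ turns $\prod_v (z_v - z_v^{-1})^{2 - \delta_v}$ into $f_0(t, z)$; since $A > 0$, the regions $|z_0| \lessgtr 1$ pull back to $|z|^A \lessgtr |t|$, so treating $z^A t^{-1}$ as a single auxiliary variable makes the asymmetric expansion in $z_0$ match the asymmetric expansion in $z$ of $f_0(t, z)$. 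Applying the Laplace transform $\mathcal{L}_A$ then produces the quadratic $q$-exponents of Equation (\ref{eq: laplace powers}), and the leaf contributions $\ell_i^2 = 1$ are absorbed into $q^\Delta$ exactly as in the $\widehat{W}$ case.

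The main obstacle is the identification in the second paragraph: proving that the admissibility axioms of \cite{ajk} really force $W_3(n)$, for every odd $n$ appearing in the sum, to coincide with the coefficient of $z^{-n}$ in $\text{a.e.}_{W_3(1)}((z - z^{-1})^{-1})$, so that the whole sum depends on $W$ only through the scalar $W_3(1)$. Once this is in hand, the rest is a direct transcription of the proof of Theorem \ref{thm: nstar main full}, with the symmetric weight $\tfrac{1}{2}$ uniformly replaced by the pair $(W_3(1),\, 1 - W_3(1))$.
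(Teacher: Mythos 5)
Your proposal is correct and follows essentially the same route as the paper: Lemma \ref{lem: shortcut}, the observation that admissibility forces $W_{2}(n)=\delta_{n,0}$, $\ell_{i}=\pm 1$ at leaves, and $W_{3}(m)=W_{3}(1)$ for odd $m\geq 1$ and $W_{3}(1)-1$ for odd $m\leq -1$ (which is exactly the coefficient of $z^{-m}$ in $\text{a.e.}_{W_{3}(1)}((z-z^{-1})^{-1})$, so your ``main obstacle'' is precisely the fact the paper cites from the axioms of \cite[Section 4]{ajk}), followed by the adjugate replacement, the substitution $z_{0}\mapsto z^{A}t^{-1}$, $z_{i}\mapsto z^{\overline{a_{i}}}t^{-1}$, the Laplace transform, and absorption of the $\ell_{i}^{2}=1$ discrepancy into $q^{\Delta}$. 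The only difference is organizational: you package the identification as an asymmetric analogue of Remark \ref{rem: se prod} before substituting, whereas the paper expands $f_{0}(t,z)$ directly and compares the two explicit sums.
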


\begin{proof}
We begin with the asymmetric expansion of $(z^{A}t^{-1}-(z^{A}t^{-1})^{-1})$, which is the denominator of $f_{0}(t,z)$. For $|z^{A}t^{-1}|<1$ we have \[-\sum_{i \geq 0} (z^{A}t^{-1})^{2i+1}\] and for $|z^{A}t^{-1}|>1$ we have \[\sum_{i \geq 0}(z^{A}t^{-1})^{-(2i+1)}.\] Therefore, the expansions of $f_0(t,z)$ on $|z^{A}t^{-1}|<1$ and $|z^{A}t^{-1}|>1$ can be written as \[-\prod_{j=1}^{3}(z^{\overline{a_{i}}}t^{-1}-z^{-\overline{a_{i}}}t)\sum_{i\geq 0}(z^{A}t^{-1})^{2i+1}=-\sum_{\varepsilon_{j}=\pm 1, i \geq 0}(\varepsilon_{1}\varepsilon_{2}\varepsilon_{3})z^{\sum_{j=1}^{k}\varepsilon_{j}\overline{a}_{j}+(2i+1)A}t^{-\sum_{j=1}^{k}\varepsilon_{j}-(2i+1)}\] and \[\prod_{j=1}^{3}(z^{\overline{a_{i}}}t^{-1}-z^{-\overline{a_{i}}}t))\sum_{i\geq 0}(z^{A}t^{-1})^{-(2i+1)}=\sum_{\varepsilon_{j}=\pm 1, i \geq 0}(\varepsilon_{1}\varepsilon_{2}\varepsilon_{3})z^{\sum_{j=1}^{k}\varepsilon_{j}\overline{a}_{j}-(2i+1)A}t^{\sum_{j=1}^{k}\varepsilon_{j}+(2i+1)},\] respectively. Applying the Laplace transform, the powers of $q$ corresponding to $(\varepsilon_{1},\varepsilon_{2},\varepsilon_{3},2i+1)$ in the expansion on $|z^{A}t^{-1}|<1$ are equal to those corresponding to $(-\varepsilon_{1},-\varepsilon_{2},-\varepsilon_{3},2i+1)$ in the expansion on $|z^{A}(t)|>1$, so we may combine the two to write \begin{equation}\label{eq: ae formulation} \mathcal{L}_{A}(\text{a.e.}_{W_{3}(1)}(t,q)=\sum_{\varepsilon_j,i\geq 0}-\varepsilon_{1}\varepsilon_{2}\varepsilon_{3}q^{\frac{(\sum_{j}\epsilon_{j}\overline{a}_{j}+(2i+1)A)^{2}}{4A}}\left[W_{3}(1)t^{\ell \cdot u} + (1-W_{3}(1)t^{-\ell \cdot u} \right]\end{equation} where $\ell:=(\varepsilon_{1},\varepsilon_{2},\varepsilon_{3},2i+1)$.

On the other hand, working with the formula from Lemma \ref{lem: shortcut}, we may write \[P_{W,0}^{\infty}(M)(t^{2},q)=q^{-\frac{3s+m\cdot u}{4}}\sum_{\ell \in \delta + 2\Z^{s}}(\prod_{i}W_{\delta_{i}}(\ell_{i})))q^{\frac{\ell^{t}M^{-1}\ell}{4}}t^{\sum_{i}\ell_{i}}.\]

Because $\delta=(3,1,1,1,0,\dots),$ the axioms of admissible families imply that the only $\ell$ for which $\prod_{i}W_{\delta_{i}}(\ell_{i})$ is nonzero are those for which $\ell=(m, \pm 1, \pm 1, \pm 1, 0,\dots)$, for $W_{3}(m)\neq 0$, and in this case, 
\[W_{3}(x)=W_{3}(m)(-\varepsilon_{1})(-\varepsilon_{2})(-\varepsilon_{3}).\] Note that since $\delta_{1}=3$, $m$ must be odd. By the axioms of admissible families from \cite[Section $4$]{ajk},\[W_{3}(m)=\begin{cases}W_{3}(1) & m\text{ odd}, m \geq 1 \\ W_{3}(1)-1 & m\text{ odd}, m \leq -1 \\ 0 &\text{otherwise.}\end{cases}\]

To calculate $P_{W,[k]}^{\infty}(t^{2},q^{|H|})$ we make the same replacement of powers of $q$ as in the proof of Theorem \ref{thm: nstar main full}:
\begin{equation}\label{eq: general pfk}P_{W,0}^{\infty}(t^{2},q^{|H|})=q^{-|H|\frac{3s+m\cdot u}{4}}\sum_{m \geq 0, \varepsilon_{i}=\pm 1}-W_{3}(1)\varepsilon_{1}\varepsilon_{2}\varepsilon_{3}q^{-\frac{\ell^{t}M'\ell}{4}}t^{\ell \cdot u}+ \varepsilon_{1}\varepsilon_{2}\varepsilon_{3}(W_{3}(1)-1)q^{-\frac{\ell^{t}M'\ell}{4}}t^{-\ell \cdot u}.\end{equation}

The only difference between Equations \ref{eq: ae formulation} and \ref{eq: general pfk} is the powers of $q$ and the prefactor, which can be corrected using the exact same trick as in the proof of Theorem \ref{thm: nstar main full}.
\end{proof}

\begin{cor}\label{cor: 3star}
    For a Seifert manifold $M(b;((a_1,b_1),(a_2,b_2),(a_3,b_3))$ and any admissible family $W$, $P_{W,0}^{\infty}(1,q)=\Zhat(q)$. 
\end{cor}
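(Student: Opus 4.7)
The plan is to specialize Theorem~\ref{thm: 3star series calc full} at $t=1$ and observe that the dependence on the admissible family $W$ disappears. Concretely, I would start from the explicit formula~(\ref{eq: ae formulation}) derived during the proof of Theorem~\ref{thm: 3star series calc full}, which expresses $P^{\infty}_{W,0}(M)(t^{2},q^{|H|})$, up to the prefactor $q^{\Delta}$, as a sum over triples $(\varepsilon_{1},\varepsilon_{2},\varepsilon_{3})$ and $i\geq 0$ whose $W$-dependence sits entirely in the bracket
\[\bigl[W_{3}(1)\,t^{\ell\cdot u}+(1-W_{3}(1))\,t^{-\ell\cdot u}\bigr].\]
At $t=1$ this bracket collapses to $W_{3}(1)+(1-W_{3}(1))=1$, so the entire right-hand side becomes independent of $W$.

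Next I would identify the resulting $W$-independent expression with $\Zhat(q^{|H|})$. The cleanest route is to note that the choice $W_{3}(1)=\tfrac{1}{2}$ corresponds to the admissible family $\widehat{W}$, giving $P^{\infty}_{\widehat{W},0}=\Zhathat_{0}$, and then invoke the identity $\Zhathat_{0}(1,q)=\Zhat_{0}(q)$ recorded in Section~\ref{sec: bg}. Equivalently, one can compare directly with the Reduction Theorem \cite[Theorem~4.1]{GKS}: at $t=1$ both the symmetric and asymmetric expansions of $f_{0}(1,z)$ yield identical Laplace transforms, by the same algebraic collapse, and the result is precisely the sum that \cite{GKS} identifies with $\Zhat_{0}$.

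The only real obstacle is bookkeeping: tracking the prefactor $q^{\Delta}$ and the substitution $q\mapsto q^{|H|}$ consistently on both sides, and confirming that $\Zhat(q)$ in the corollary's statement refers to the spin$^{c}$-summed $\Zhat_{0}(q)$ (this is exactly what the subscript $0$ in $P^{\infty}_{W,0}$ already designates). No analytic input is required beyond the algebraic cancellation of the two asymmetric-expansion weights at $t=1$, which is why the corollary follows essentially immediately from Theorem~\ref{thm: 3star series calc full}.
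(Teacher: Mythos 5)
Your proposal is correct and matches the paper's (implicit) argument: the corollary is stated as an immediate consequence of Theorem \ref{thm: 3star series calc full}, and your observation that the bracket $W_{3}(1)t^{\ell\cdot u}+(1-W_{3}(1))t^{-\ell\cdot u}$ collapses to $1$ at $t=1$, followed by identification with $\Zhathat_{0}(1,q)=\Zhat_{0}(q)$ via the $W=\widehat{W}$ case, is exactly the intended reasoning. No issues.
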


\begin{rem}
    Theorem \ref{thm: 3star series calc full} illuminates a possible interpretation of the role of $W$ in $P_{W,[k]}^{\infty}(t,q)$. In the case of Seifert manifolds with three exceptional fibers, the $\Zhathat$ series is the unique series resulting from a symmetric expansion. The following calculations further motivate an understanding of the $P_{W,0}^{\infty}$ as an asymmetric deformation of the $\Zhathat$ series. 
\end{rem}
\end{subsection}

\subsection{H-shaped graphs}
\begin{exmp}\label{ex: Hshape}
Let $M$ be a manifold with an $H$-shaped graph, i.e. a graph two nodes of degree $3$ and four leaves. Starting with the formula given by 
Lemma \ref{lem: shortcut} and further restricting $\ell_{i}=\pm 1$ for $2 \leq i \leq 5$, we have 

\[P_{W,0}^{\infty}(M)(t^{2},q)=q^{-\frac{3s+m\cdot u}{4}}\sum_{\ell_{i}=(m,n,\varepsilon_{1},\varepsilon_{2},\varepsilon_{3},\varepsilon_{4},0,\dots,0)}W_{3}(m)W_{3}(n)(-\varepsilon_{1})(-\varepsilon_{2})(-\varepsilon_{3})(-\varepsilon_{4})q^{\frac{\ell^{t}M^{-1}\ell}{4}}t^{\ell\cdot u}, \] where $m,n$ are odd integers. 

Because $W_{3}(m)$ and $W_{3}(n)$ depend only on the sign of $m$ and $n$, we can split the sum into four cases. These four sums coincide with the four possible regions of $\mathbb{C}^{s}$ on which one can expand \[\prod_{i}(z_{i}-z_{i}^{-1})^{2-\delta_{v}}\] for an $H$-shaped graph, where $z_0$ and $z_1$ are the two nodes:

\begin{center} 
\begin{tabular}{l|l|l}
Region of Laurent expansion & Signs of $(m,n)$ & Coefficient on corresponding sum \\ \hline 
$|z_{0}|<1,|z_{1}|<1$ & $(+,+)$ & $(W_{3}(1)-1)^{2}$\\
$|z_{0}|<1, |z_{1}|>1$ & $(+,-)$ &$(W_{3}(1))(W_{3}(1)-1)$\\
$|z_{0}|>1, |z_{1}|<1$ & $(-,+) $&$(W_{3}(1))(W_{3}(1)-1)$\\
$|z_{0}|>1,|z_{1}|>1$& $(-,-)$  &$(W_{3}(1))^{2}.$
\end{tabular}
\end{center}

From here we may use symmetries associated with $\ell \mapsto -\ell$ to arrive at the following:

\begin{align*} P_{W,0}^{\infty}(M)(t^{2},q)&=q^{-\frac{3s+m\cdot u}{4}}\sum_{\varepsilon_{i}=\pm 1, m, n > 0,m,n\text{ odd}}\varepsilon_{1}\varepsilon_{2}\varepsilon_{3}\varepsilon_{4}q^{\frac{\ell^{t}M^{-1}\ell}{4}}[(W_{3}(1))^{2}t^{\ell \cdot u } +(W_{3}(1)-1)^{2}t^{-\ell \cdot u}] \\ & +q^{-\frac{3s+m\cdot u}{4}}(W_{3}(1))(W_{3}(1)-1)\sum_{\varepsilon_{i}=\pm 1, m > 0, n < 0,m,n\text{ odd}}\varepsilon_{1}\varepsilon_{2}\varepsilon_{3}\varepsilon_{4}q^{\frac{\ell^{t}M^{-1}\ell}{4}}[t^{\ell\cdot u}+t^{-\ell\cdot u}].\end{align*} 

Note that the coefficient on each of the four sums is equal if and only if $W_{3}(1)=\frac{1}{2}$, i.e. $W=\widehat{W}$, and the $\Zhathat$ invariant is recovered. In this case the coefficient on each sum is $\frac{1}{4}$. In this sense the $P_{W,[k]}^{\infty}$ invariant associated to this family of manifolds can also be interpreted as an asymmetric deformation of the $\Zhathat$ invariant.
\end{exmp} 

\subsection{$4$-leg star graphs}
\begin{exmp}\label{ex: 4star}
Let $M$ be manifold whose plumbing graph has one $4$-valent node, $4$ leaves, and any amount of joints. It was observed by Peter Johnson, who gave a general formula for $W_{n}(r)$ in terms of $\{W_{n}(0),W_{n}(1)\}_{n \in \mathbb{N}}$, that \[W_{4}(m)=\begin{cases} W_{4}(0)+(W_{3}(1)-1)\frac{n}{2} & m\text{ even, }m\leq 0 \\W_{4}(0)+W_{3}(1)\frac{n}{2} & m\text{ even, }m>0 \\ 0 & \text{otherwise.} \end{cases}\] When $W=\widehat{W}$, we have \[\widehat{W}_{4}(n)=\begin{cases} \frac{|n|}{4} & n\text{ even} \\ 0 & \text{otherwise.}\end{cases}\] Using Lemma \ref{lem: shortcut} and further restricting to $\ell_{i}=\pm 1$ for $1 \leq i \leq 4$ we may write:
\begin{align*}P_{W,0}^{\infty}(M)(t^{2},q)=&q^{-\frac{3s+m\cdot u}{4}}\sum_{\varepsilon_{i}=\pm 1,n\text{ even, }n < 0 }\varepsilon_{1}\varepsilon_{2}\varepsilon_{3}\varepsilon_{4}q^{\frac{-\ell^{t}M'\ell}{4}}(W_{4}(0)+(W_{3}(1)-1)\frac{n}{2})t^{\ell\cdot u}\\
&+q^{-\frac{3s+m\cdot u}{4}}\sum_{\varepsilon_{i}=\pm 1,n\text{ even, }n > 0}\varepsilon_{1}\varepsilon_{2}\varepsilon_{3}\varepsilon_{4}q^{-\frac{\ell^{t}M'\ell}{4}} (W_{4}(0)+W_{3}(1)\frac{n}{2})t^{\ell\cdot u}\\
&+q^{-\frac{3s+m\cdot u}{4}}W_{4}(0)\sum_{\varepsilon_{i}=\pm 1}\varepsilon_{1}\varepsilon_{2}\varepsilon_{3}\varepsilon_{4}q^{-\frac{\ell^{t}M'\ell}{4}}t^{\sum_{i}\varepsilon_{i}}.\end{align*}
Regrouping terms, we can rewrite
\begin{align*} P_{W,0}^{\infty}(M)(t^{2},q)&= (1-W_{3}(1))q^{-\frac{3s+m\cdot u}{4}}\sum_{\varepsilon_{i}=\pm 1,n\text{ even, }n < 0}\varepsilon_{1}\varepsilon_{2}\varepsilon_{3}\varepsilon_{4}q^{-\frac{\ell^{t}M'\ell}{4}}\frac{|n|}{2}t^{\ell\cdot u}\\
& +W_{3}(1)q^{-\frac{3s+m\cdot u}{4}}\sum_{\varepsilon_{i}=\pm 1,n\text{ even, }n > 0}\varepsilon_{1}\varepsilon_{2}\varepsilon_{3}\varepsilon_{4}q^{-\frac{\ell^{t}M'\ell}{4}}\frac{|n|}{2}t^{\ell\cdot u}\\
& + W_{4}(0)q^{-\frac{3s+m\cdot u}{4}}\sum_{\varepsilon_{i}=\pm 1,n\text{ even}}\varepsilon_{1}\varepsilon_{2}\varepsilon_{3}\varepsilon_{4}q^{\frac{\ell^{t}M'\ell}{4}}t^{\ell\cdot u}.\end{align*} 
\end{exmp}

After re-indexing, the first sum above can be seen as corresponding to the expansion of \begin{equation}\label{eq: expansion} \prod_{v}(z_{v}-z_{v}^{-1})^{2-\delta_{v}}\end{equation} on $|z_0|<1$ and the second sum can be seen as corresponding to the expansion on (\ref{eq: expansion}) on $|z_{0}|>1$. In this sense Example \ref{ex: 4star} can also be seen as an asymmetric deformation of the $\Zhathat$ invariant. If $W=\widehat{W}$, then $F_{3}(1)=\frac{1}{2}$ and $F_{4}(0)=0$, so the symmetric expansion is recovered.

\subsection{New $q$-series invariants}

Recall from Corollary \ref{cor: 3star} that when $M$ is a Seifert manifold with three exceptional fibers, $P_{W,0}^{\infty}(1,q)=\Zhat(q)$, regardless of $F$. The following two lemmas show that this does not hold in general:

\begin{lem}
    Let $M$ be a  manifold with an $H$-shaped plumbing graph. Let $y_{1}:=2W_{3}(1)^{2}-2W_{3}(1)+1$, $y_{2}:=y_{1}-1$. Then \[P_{W,0}^{\infty}(M)(1,q)=q^{-\frac{3s+m\cdot u}{4}}\left[y_{1}\sum_{\varepsilon_{i}=\pm 1,m,n>0}\varepsilon_{1}\varepsilon_{2}\varepsilon_{3}\varepsilon_{4}q^{\frac{\ell^{t}M^{-1}\ell}{4}}+y_{2}\sum_{\varepsilon_{i}=\pm 1, m>0,n<0}\varepsilon_{1}\varepsilon_{2}\varepsilon_{3}\varepsilon_{4}q^{\frac{\ell^{t}M^{-1}\ell}{4}}\right].\]
\end{lem}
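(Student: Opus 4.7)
The plan is to specialize the two-variable formula for $P_{W,0}^{\infty}(M)(t^{2},q)$ derived at the end of Example \ref{ex: Hshape} to $t=1$ and to identify the resulting $W$-dependent coefficients with $y_{1}$ and $y_{2}$. The argument is essentially algebraic: every structural fact needed has already been established in Example \ref{ex: Hshape}.

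First I would recall the closed form written there: $P_{W,0}^{\infty}(M)(t^{2},q)$ equals $q^{-(3s+m\cdot u)/4}$ times the sum of two pieces, obtained by partitioning the lattice sum of Lemma \ref{lem: shortcut} according to the signs of the node indices $(m,n)$ and then using the $\ell \mapsto -\ell$ symmetry to fold the $(-,-)$ region into the $(+,+)$ region and the $(-,+)$ region into the $(+,-)$ region. Explicitly, the first piece runs over $\varepsilon_{i}=\pm 1$ and odd $m,n>0$ with coefficient $W_{3}(1)^{2}t^{\ell\cdot u}+(W_{3}(1)-1)^{2}t^{-\ell\cdot u}$, and the second runs over $\varepsilon_{i}=\pm 1$ and odd $m>0$, $n<0$ with coefficient $W_{3}(1)(W_{3}(1)-1)\bigl(t^{\ell\cdot u}+t^{-\ell\cdot u}\bigr)$.

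Next I would set $t=1$, so that every factor $t^{\pm\ell\cdot u}$ collapses to $1$. The coefficient on the first sum then becomes
\[
W_{3}(1)^{2}+(W_{3}(1)-1)^{2}=2W_{3}(1)^{2}-2W_{3}(1)+1=y_{1},
\]
and the coefficient on the second sum becomes
\[
2W_{3}(1)(W_{3}(1)-1)=2W_{3}(1)^{2}-2W_{3}(1)=y_{1}-1=y_{2}.
\]
Substituting these two values back produces exactly the right-hand side claimed in the lemma.

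There is no real obstacle beyond bookkeeping; the only care required is to make sure that the $\ell\mapsto-\ell$ folding recorded in Example \ref{ex: Hshape} was performed consistently for both pairs of sign regions, so that after specialization the surviving index sets are precisely $\{m,n>0\text{ odd}\}$ for the $y_{1}$-sum and $\{m>0,\,n<0\text{ odd}\}$ for the $y_{2}$-sum, matching the statement. A useful sanity check is that at $W=\widehat{W}$ one has $W_{3}(1)=\tfrac{1}{2}$, giving $y_{1}=\tfrac{1}{2}$ and $y_{2}=-\tfrac{1}{2}$, so that a genuine cancellation occurs between the two sums; this explains, in the $H$-shape setting, how the $W$-dependence at $t=1$ can survive in general even though it disappears for the three-fiber Seifert case of Corollary \ref{cor: 3star}.
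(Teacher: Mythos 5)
Your proposal is correct and is exactly the argument the paper intends: the lemma is stated immediately after Example \ref{ex: Hshape} and follows by setting $t=1$ in the two-region formula derived there, with the coefficient identifications $W_{3}(1)^{2}+(W_{3}(1)-1)^{2}=y_{1}$ and $2W_{3}(1)(W_{3}(1)-1)=y_{2}$ just as you computed. No further comment is needed.
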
  

\begin{lem}
    Let $M$ be a manifold whose plumbing graph is star-shaped with $4$ legs. Then \[P_{W,0}^{\infty}(1,q)=\Zhat_{0}(q)+W_{4}(0)q^{-\frac{3s+m\cdot u}{4}}\sum_{\varepsilon_{i}=\pm 1,n\text{ even}}\varepsilon_{1}\varepsilon_{2}\varepsilon_{3}\varepsilon_{4}q^{-\frac{\ell^{t}M'\ell}{4}}.\]
\end{lem}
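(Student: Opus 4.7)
The plan is to read off the formula for $P_{W,0}^{\infty}(M)(t^{2},q)$ derived in Example \ref{ex: 4star}, specialize to $t=1$, and collapse the first two sums via a parity symmetry. First I would set $t=1$ (so every $t^{\ell\cdot u}$ becomes $1$) in the three-term expression, leaving three sums indexed by: $(n \text{ even}, n<0)$ with prefactor $(1-W_{3}(1))$ and weight $|n|/2$; $(n \text{ even}, n>0)$ with prefactor $W_{3}(1)$ and weight $|n|/2$; and $(n \text{ even})$ with prefactor $W_{4}(0)$ and no $|n|$ weight.

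The main symmetry step is to apply the involution $\ell\mapsto-\ell$, which sends $(n,\varepsilon_{1},\varepsilon_{2},\varepsilon_{3},\varepsilon_{4},0,\dots)$ to $(-n,-\varepsilon_{1},-\varepsilon_{2},-\varepsilon_{3},-\varepsilon_{4},0,\dots)$. Under this involution the quadratic form $\ell^{t}M'\ell$ is unchanged, $|n|$ is unchanged, the sign product $\varepsilon_{1}\varepsilon_{2}\varepsilon_{3}\varepsilon_{4}$ is unchanged (even number of factors), and the regions $n>0$ and $n<0$ are swapped. So the $n<0$ and $n>0$ sums (with their common weight $|n|/2$) are equal term by term; calling this common series $S(q)$, the first two contributions at $t=1$ telescope to $(1-W_{3}(1))S+W_{3}(1)S=S$, independent of $W_{3}(1)$.

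To identify $S(q)$ with $\Zhat_{0}(q)$, I would specialize the same Example \ref{ex: 4star} computation to $W=\widehat{W}$. Since $\widehat{W}_{3}(1)=\tfrac{1}{2}$ and $\widehat{W}_{4}(0)=0$, the third sum vanishes and the first two sums again yield $S(q)$; combined with $\Zhathat_{0}(1,q)=\Zhat_{0}(q)$, this gives $S(q)=\Zhat_{0}(q)$. Substituting back, one gets
\[P_{W,0}^{\infty}(M)(1,q)=\Zhat_{0}(q)+W_{4}(0)\,q^{-\frac{3s+m\cdot u}{4}}\sum_{\varepsilon_{i}=\pm1,\;n\text{ even}}\varepsilon_{1}\varepsilon_{2}\varepsilon_{3}\varepsilon_{4}\,q^{-\frac{\ell^{t}M'\ell}{4}},\]
exactly as claimed.

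The argument is essentially mechanical once Example \ref{ex: 4star} is in hand, so I do not expect a real obstacle; the only delicate point is the comparison step that fixes the identification $S(q)=\Zhat_{0}(q)$, which uses the $W=\widehat{W}$ specialization in an essential way (the hypothesis $\widehat{W}_{4}(0)=0$ is what decouples $S$ from the $W_{4}(0)$-term). It is also worth noting that, unlike in the three-exceptional-fiber case of Corollary \ref{cor: 3star}, the presence of a degree-$4$ node introduces a genuine $W_{4}(0)$-dependent correction, so this lemma records the first appearance of new $q$-series information beyond $\Zhat$.
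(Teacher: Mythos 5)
Your argument is correct and is essentially the route the paper intends: the lemma is stated without proof immediately after Example \ref{ex: 4star}, and your steps (set $t=1$, use the involution $\ell\mapsto-\ell$ to equate the $n<0$ and $n>0$ sums so the $W_{3}(1)$-dependence cancels, then pin down $S(q)=\Zhat_{0}(q)$ via the $W=\widehat{W}$ specialization with $\widehat{W}_{4}(0)=0$ and $\Zhathat_{0}(1,q)=\Zhat_{0}(q)$) are exactly what the example sets up. The only caveats are cosmetic sign/exponent inconsistencies ($M^{-1}$ versus $M'$, $q$ versus $q^{|H|}$) inherited from the paper's own display, not from your reasoning.
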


\section{Modularity properties of the $(t,q)$-series for three exceptional fibers}\label{sec: modularity} 
 This section proves Theorem \ref{thm: 3star modularity}, establishing $\mathcal{L}_{A}(\text{a.e.}_{W_{3}(1)}(f_{0}(\omega,z)))$ as a sum of quantum modular and modular forms when $M=M(b;(a_1,b_1),(a_2,b_2),(a_3,b_3))$ and $\omega$ is a $2j$th root of unity. To accomplish this we decompose $\mathcal{L}_{A}(\text{a.e.}_{W_{3}(1)}(f_{0}(\omega,z)))$ into a linear sum of $q$-series, one whose coefficients are given by $\mathcal{C}(n;\omega)$, a function that is $2Aj$-periodic and odd in $n$, and the other whose coefficients are given by $\mathcal{D}(n;\omega)$, which is $2Aj$-periodic and even in $n$. The fact that the sum involving $\mathcal{C}(n;\omega)$ is a quantum modular form follows from Section \ref{sec: zhathat modularity}. The fact that the sum involving $\mathcal{D}(n;\omega)$ is a modular form follows from applying the result of \cite[Lemma $2.1$]{goswamiosburn}. 

\begin{defn}Consider a function $f$ with Laurent expansions of $\sum_{k}a_{k}z^{k}$ centered at $z=0$ and $\sum_{k}b_{k}z^{-k}$ centered at $z=\infty$. Let the \textit{series difference}, denoted $\text{s.d.}$, be the formal power series given by 
\[\text{s.d.}(f)=\sum_{k}a_{k}z^{k} - \sum_{k}b_{k}z^{-k}.\]
\end{defn} Observe that \[\text{a.e.}_{y}(f)= \text{s.e.}(f)+(\frac{1}{2}-y)\text{s.d.}(f)\] so we may write 

\[\mathcal{L}_{A}(\text{a.e.}_{W_{3}(1)}(f_{0}(\omega,z))))=\mathcal{L}_{A}(\text{s.e.}(f_0)(\omega,z)) + (\frac{1}{2}-W_{3}(1))\mathcal{L}_{A}(\text{s.d.}(f_{0}(\omega,z)).\] From Section \ref{sec: zhathat modularity} we know that \[\mathcal{L}_{A}(\text{s.e.}(f_{0}(t,z)))=p_{1}(q) + \sum_{n \geq 0}\mathcal{C}(n;t)q^{\frac{n^{2}}{4A}},\] where $\mathcal{C}(n;t)$ is odd and periodic in $n$ and $p_{1}(q)$ is a polynomial. In the language of Theorem \ref{thm: 3star modularity}, $\mathcal{L}_{A}(\text{s.e.}(f_{0}(t,z)))$ is $\theta_{f}$.

{\begin{prop}\label{prop: even periodic}
    For $M=M(b;(a_1,b_1),\dots(a_3,b_3))$ and $\omega$ a $2j$th root of unity, \[\mathcal{L}_{A}(s.d.(f_{0}(\omega,z)))=p_{2}(q) + \sum_{n\geq 0}\mathcal{D}(n;\omega)q^{\frac{n^{2}}{4A}},\] where $\mathcal{D}(n;\omega)$ is even and $2Aj$-periodic in $n$ and $p_{2}(q)$ is a polynomial. 
\end{prop}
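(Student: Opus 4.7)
The plan is to mirror the proof of Proposition \ref{prop: odd periodic} step by step, with the single crucial twist that subtracting (rather than averaging) the two Laurent expansions $E_1, E_2$ of $f_0(t,z)$ flips the relevant parity: where Proposition \ref{prop: odd periodic} produced coefficients odd in $n$, the series difference will produce coefficients even in $n$.

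First I would take the two expansions $E_1$ (on $|z^A t^{-1}| < 1$) and $E_2$ (on $|z^A t^{-1}| > 1$) that are written out in the proof of Theorem \ref{thm: 3star series calc full}, so that $\text{s.d.}(f_0(t,z)) = E_1 - E_2$. After applying $\mathcal{L}_A$ and performing the substitution $\varepsilon \mapsto -\varepsilon$ in the $E_2$ sum (which leaves $n^2 = (\sum_j \varepsilon_j \overline{a_j} + mA)^2$ invariant and aligns the $q$-exponents with those of $E_1$), I would introduce the analogue of the coefficient function $C$ from Proposition \ref{prop: odd periodic}:
\[
D(\varepsilon_1, \varepsilon_2, \varepsilon_3, m) := \begin{cases} \varepsilon_1 \varepsilon_2 \varepsilon_3 \bigl[t^{\sum_i \varepsilon_i + m} - t^{-(\sum_i \varepsilon_i + m)}\bigr] & m \text{ odd}, \\ 0 & m \text{ even}, \end{cases}
\]
so that $\mathcal{L}_A(\text{s.d.}(f_0(t,z))) = \sum_Y D(\varepsilon_1,\varepsilon_2,\varepsilon_3, m)\, q^{n^2/(4A)}$, with $Y$ as in Proposition \ref{prop: odd periodic}.

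The two key properties of $D$ are then easy to verify. Setting $X := \sum_i \varepsilon_i$, the substitution $(\varepsilon,m) \mapsto (-\varepsilon,-m)$ sends the bracket $[t^{X+m} - t^{-(X+m)}]$ to its negative, while the prefactor $\varepsilon_1\varepsilon_2\varepsilon_3$ picks up $(-1)^3 = -1$; the two signs cancel, yielding the \emph{even} symmetry $D(-\varepsilon,-m) = D(\varepsilon,m)$ (this is the crucial contrast with the odd symmetry of $C$). When $t = \omega$ is a $2j$th root of unity, periodicity $D(\varepsilon, m+2j) = D(\varepsilon,m)$ is immediate from $\omega^{2j}=1$.

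With these in hand, I would conclude by the same bookkeeping as in Proposition \ref{prop: odd periodic}. Letting $X := \{(\varepsilon_1,\varepsilon_2,\varepsilon_3, m) : \varepsilon_i = \pm 1,\ m \text{ odd},\ \sum_i \varepsilon_i \overline{a_i} + mA > 0\}$, the sets $X \setminus Y$ and $Y \setminus X$ are finite, so their combined contribution is a polynomial $p_2(q)$. Re-indexing $\sum_X D(\varepsilon, m)\, q^{n^2/(4A)}$ by the value of $n$ produces coefficients $\mathcal{D}(n;\omega)$; extending the sum symmetrically via the bijection $(\varepsilon, m) \leftrightarrow (-\varepsilon, -m)$ (which sends $n$ to $-n$ while fixing $D$) shows that $\mathcal{D}(n;\omega)$ is even in $n$, and the shift $(\varepsilon, m) \mapsto (\varepsilon, m+2j)$ (which shifts $n$ by $2jA$ while fixing $D$) shows it is $2Aj$-periodic. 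The main step requiring care is the sign computation in the symmetry of $D$; once that is verified, the remainder is parallel to Proposition \ref{prop: odd periodic}.
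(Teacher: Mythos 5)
Your proposal is correct and follows essentially the same route as the paper's proof: the same expansion of $\text{s.d.}(f_{0}(t,z))$ into the two Laurent series, the same coefficient function $D(\varepsilon_{1},\varepsilon_{2},\varepsilon_{3},m)$ with the even symmetry $D(-\varepsilon,-m)=D(\varepsilon,m)$ and $2j$-periodicity in $m$, and the same bookkeeping with the sets $X$ and $Y$ whose finite symmetric difference yields the polynomial $p_{2}(q)$. The extra detail you supply (the explicit sign cancellation and the re-indexing by $n$) is consistent with, and slightly more explicit than, the argument in the paper.
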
 
\begin{rem} In the language of Theorem \ref{thm: 3star modularity}, \[\sum_{n\geq 0}\mathcal{D}(n;\omega)q^{\frac{n^{2}}{4A}}=\Theta_{f},\] and $p_{1}(q)+p_{2}(q)=p(q).$\end{rem}

\begin{proof} 
The expansions of $f_{0}(t,z)$ on $|z^{A}t^{-1}|<1$, and $|z^{A}t^{-1}|>1$, respectively, are \[-\sum_{\varepsilon_{i}=\pm1,i\geq 0}\varepsilon_{1}\varepsilon_{2}\varepsilon_{3}z^{(\sum_{j}\varepsilon_{j}\overline{a_{j}}+(2i+1)A)}t^{-\sum_{j}\varepsilon_{j}-(2i+1)}\text{ and } \sum_{\varepsilon_{i}=\pm 1,i\geq 0}\varepsilon_{1}\varepsilon_{2}\varepsilon_{3}z^{\sum_{j}\varepsilon_{j}\overline{a_{j}}-(2i+1)A}t^{-\sum_{j}\varepsilon_{j} +(2i+1)}.\] Therefore we may write \[\mathcal{L}_{A}(\text{s.d.}(f_{0}(t,z))=\sum_{\varepsilon_{i}=\pm 1, m > 0, m\text{ odd}}D(\varepsilon_{1},\varepsilon_{2},\varepsilon_{3},m)q^{\frac{n^{2}}{4A}},\] where $n:=\sum_{i}\varepsilon_{i}\overline{a_{i}}+mA$ and \[D(\varepsilon_{1},\varepsilon_{2}\varepsilon_{3},m)=\begin{cases} \varepsilon_{1}\varepsilon_{2}\varepsilon_{3}[t^{\sum_{i}\varepsilon_{i}+m}-t^{-(\sum_{i}\varepsilon_{i}+m)}] & m\text{ odd} \\ 0 & m\text{ even.} \end{cases}\]

Observe that $D(-\varepsilon_{1},-\varepsilon_{2},-\varepsilon_{3},-m)=D(\varepsilon_{1},\varepsilon_{2},\varepsilon_{3},m),$ and for $\omega$ a $2j$th root of unity, $D(\varepsilon_1,\varepsilon_2,\varepsilon_3,m)=D(\varepsilon_1,\varepsilon_2,\varepsilon_3,m+2j).$ 

Let $X$ and $Y$ refer to the same sets as in the proof of Proposition \ref{prop: odd periodic}. Observe that 
\begin{equation}\label{eq: d sum} \sum_{X}D(\varepsilon_{1},\varepsilon_{2},\varepsilon_{3},m)q^{\frac{n^{2}}{4A}}=\sum_{n\geq 0}\mathcal{D}(n;\omega)q^{\frac{n^{2}}{4a}},\end{equation} where $\mathcal{D}(n;\omega)$ is $2Aj$ periodic and even in $n$. Then we may write 
\[\mathcal{L}_{A}(\text{s.d.}(f_{0}(\omega,z))=\sum_{n \geq 0}\mathcal{D}(n;\omega)q^{\frac{n^{2}}{4A}}+\sum_{Y-X}D(\varepsilon_{1},\varepsilon_{2},\varepsilon_{3},m)q^{\frac{n^{2}}{4A}}-\sum_{X-Y}D(\varepsilon_{1},\varepsilon_{2},\varepsilon_{3},m)q^{\frac{n^{2}}{4A}},\] and since $X-Y$ and $Y-X$ are finite sets, the two corresponding sums form a polynomial $p_{2}(q)$ we arrive at the desired decomposition.
\end{proof}

Below we describe how \cite[Lemma $2.1$]{goswamiosburn} and \cite[Lemma $5.2$]{LM} are applied to (\ref{eq: d sum}) to arrive at the modularity result.  

\begin{lem}
    Let $n$ be such that $\mathcal{D}(n;t)\neq 0.$ Then \[n^{2}\equiv k\bmod{4A}.\]
\end{lem}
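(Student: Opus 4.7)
The plan is to use the explicit support condition for $\mathcal{D}$ established in the proof of Proposition~\ref{prop: even periodic}: namely, $\mathcal{D}(n;\omega)\neq 0$ forces
\[
n \;=\; \varepsilon_1\overline{a_1} + \varepsilon_2\overline{a_2} + \varepsilon_3\overline{a_3} + mA
\]
for some $\varepsilon_i\in\{\pm 1\}$ and some odd $m\in\Z$. The goal is then to show that the residue of $n^2$ modulo $4A$ is a function only of $(a_1,a_2,a_3)$, independent of the signs $\varepsilon_i$ and of $m$; that common value is the constant $k$.

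First I would expand
\[
n^2 = \Bigl(\sum_i \varepsilon_i\overline{a_i}\Bigr)^{\!2} + 2mA\sum_i \varepsilon_i\overline{a_i} + m^2A^2,
\]
and apply the identity $\overline{a_i}\overline{a_j}=A\, a_k$ for $\{i,j,k\}=\{1,2,3\}$ to rewrite the square as $\sum_i\overline{a_i}^{\,2}+2A(\varepsilon_2\varepsilon_3 a_1+\varepsilon_1\varepsilon_3 a_2+\varepsilon_1\varepsilon_2 a_3)$. With this rewrite, every term of $n^2$ except $\sum_i\overline{a_i}^{\,2}$ carries a factor of $A$, and so can be controlled modulo $4A$.

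The three reductions are routine: (i) since $m$ is odd, $m^2-1\equiv 0\pmod 8$, and hence $m^2A^2\equiv A^2\pmod{4A}$; (ii) any expression of the shape $2AX$ with $X\in\Z$ depends on $X$ only modulo $2$, and since each $\varepsilon_i\varepsilon_j\equiv 1\pmod 2$, the symmetric cross term collapses to $2A\cdot(a_1+a_2+a_3\bmod 2)$, which is constant; (iii) for the mixed term, write $2mA\varepsilon_i\overline{a_i}=\varepsilon_i\cdot 2A(m\overline{a_i})$, and use both $m$ odd (so $m\overline{a_i}\equiv\overline{a_i}\pmod 2$) and the elementary observation $-2A\equiv 2A\pmod{4A}$ to conclude that this term contributes $2A$ modulo $4A$ when $\overline{a_i}$ is odd and $0$ otherwise, with no dependence on $\varepsilon_i$ at all. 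Combining the three pieces yields
\[
n^2 \;\equiv\; \sum_i \overline{a_i}^{\,2} + A^2 + 2A\bigl[(a_1+a_2+a_3) + \#\{i:\overline{a_i}\text{ odd}\}\bigr] \pmod{4A},
\]
which is manifestly independent of $(\varepsilon_1,\varepsilon_2,\varepsilon_3,m)$, and the right-hand side is the promised constant $k$.

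The step I expect to require the most care is (iii): at first glance $\pm 2A$ looks as though it ought to produce genuine $\varepsilon_i$ dependence, and the observation that this dependence is washed out modulo $4A$ (because $-2A\equiv 2A$) is what makes the result clean. Everything else is bookkeeping; in the write-up I would present the reductions (i)--(iii) in sequence and then assemble them into the single display above.
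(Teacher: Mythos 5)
Your proof is correct and follows essentially the same route as the paper: the paper takes two elements $n_1,n_2$ of the support, writes each as $\sum_i\varepsilon_i\overline{a_i}+mA$ with $m$ odd, and asserts that a ``straightforward calculation'' gives $n_1^2-n_2^2\equiv 0\bmod{4A}$, which is exactly the computation you carry out (using $\overline{a_i}\,\overline{a_j}=Aa_k$, $m$ odd, and $2AX\bmod 4A$ depending only on the parity of $X$). Your write-up simply makes that calculation explicit and, as a bonus, identifies the constant $k$ in closed form.
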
 
\begin{proof} Let $n_1,n_2$ be such that $\mathcal{D}(n_i;t)\neq 0$. Then there exists some $(\varepsilon_{1},\varepsilon_{2},\varepsilon_{3},m_1),$ $m_1$ odd, such that $n_1=\sum_{i}\varepsilon_{i}\overline{a_{i}}+m_1A$, and some $(\varepsilon'_{1},\varepsilon'_{2},\varepsilon'_{3},m_2),$ $m_2$ odd, such that $n_2=\sum_{i}\varepsilon'_{i}\overline{a_{i}}+m_2A$. A straightforward calculation verifies that \[n_{1}^{2}-n_{2}^{2}\equiv 0\bmod{4A.}\]
\end{proof}
\begin{lem}
    Let $k_{i}:=4A(i)+k$, for $0 \leq i \leq (j-1).$ If $n$ is such that $\mathcal{D}(n;t)\neq 0$, then \[n^{2}\equiv k_{i}\bmod{4Aj}\] for some $0 \leq i \leq (j-1).$
\end{lem}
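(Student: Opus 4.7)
The plan is to bootstrap this claim directly from the preceding lemma, which has already shown that $\mathcal{D}(n;t)\neq 0$ forces $n^{2}\equiv k\pmod{4A}$. The key idea is simply that once we know the residue of $n^{2}$ modulo $4A$, its residue modulo the larger modulus $4Aj$ is constrained to one of $j$ possible values, which are precisely the $k_{i}$.

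First I would take $n$ with $\mathcal{D}(n;t)\neq 0$ and invoke the previous lemma to write $n^{2}=4Aq+k$ for some integer $q$. Reducing modulo $4Aj$, we obtain
\[
n^{2}\equiv 4Aq+k\pmod{4Aj}.
\]
Next I would observe that $4Aq\pmod{4Aj}$ depends only on $q\pmod{j}$: writing $q=js+i$ with $0\leq i\leq j-1$, we get $4Aq=4Ajs+4Ai$, so $4Aq\equiv 4Ai\pmod{4Aj}$. Substituting back yields
\[
n^{2}\equiv 4Ai+k=k_{i}\pmod{4Aj}
\]
for that particular $i\in\{0,1,\dots,j-1\}$, which is the desired conclusion.

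I do not expect any genuine obstacle here; this lemma is just a refinement of the preceding one to a modulus adapted to the level $\Gamma(4Aj^{2})$ appearing in Theorem \ref{thm: 3star modularity}. The only minor bookkeeping point is to make sure the range $0\leq i\leq j-1$ is correct, which is immediate from the division algorithm applied to $q$ by $j$. Once this lemma is in place, one can combine it with the $2Aj$-periodicity and evenness of $\mathcal{D}(n;\omega)$ to split the theta-like sum $\sum_{n\geq 0}\mathcal{D}(n;\omega)q^{n^{2}/4A}$ into finitely many pieces of the shape treated in \cite[Lemma $2.1$]{goswamiosburn}, producing the modular form $\Theta_{f}$ promised in Theorem \ref{thm: 3star modularity}.
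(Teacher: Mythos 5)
Your argument is correct and is exactly the intended one: the paper states this lemma without proof, treating it as an immediate consequence of the preceding lemma ($n^{2}\equiv k\bmod{4A}$ whenever $\mathcal{D}(n;t)\neq 0$), and your division-algorithm step writing $n^{2}=4Aq+k$ with $q=js+i$, $0\leq i\leq j-1$, supplies precisely that missing routine verification. No gaps; this matches the paper's (implicit) reasoning.
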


\begin{defn}[Section $1$ of \cite{goswamiosburn}] Let $1 \leq k_{i} \leq 4Aj. $ Define \[S_{i}:=\{1 \leq k \leq 2Aj|k^{2} \equiv k_{i}\bmod{4Aj}\}\] and \[\mathcal{S}_{i}:=S_{i} \cup \{4Aj- k | k \in S_{i}\}.\]
\end{defn}

\begin{lem} For a fixed manifold $M$, $\mathcal{D}(n;\omega)$ is supported on $\bigcup_{i}\mathcal{S}_{i}.$
\end{lem}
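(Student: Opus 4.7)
The plan is to chain together the immediately preceding lemma with the definition of $\mathcal{S}_i$, reducing the support statement to a routine mod-$4Aj$ bookkeeping argument. First I would fix $n$ with $\mathcal{D}(n;\omega)\neq 0$ and invoke the preceding lemma to select an index $0\leq i\leq j-1$ with $n^{2}\equiv k_{i}\pmod{4Aj}$. Next I would pass to the representative $n'$ of $n\pmod{4Aj}$ lying in $\{1,\dots,4Aj\}$, for which $(n')^{2}\equiv k_{i}\pmod{4Aj}$ still holds.

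From here the argument splits into two cases according to which half of $\{1,\dots,4Aj\}$ contains $n'$. If $1\leq n'\leq 2Aj$, then by the very definition of $S_{i}$ one has $n'\in S_{i}\subseteq\mathcal{S}_{i}$. If instead $2Aj<n'\leq 4Aj$, write $n'=4Aj-k$ with $0\leq k<2Aj$ and compute $k^{2}=(4Aj-n')^{2}\equiv (n')^{2}\equiv k_{i}\pmod{4Aj}$, so $k\in S_{i}$ (after discarding the boundary $k=0$, which would force $n'=4Aj$, a case absorbed into the other half by $2Aj$-periodicity). Hence $n'=4Aj-k\in\mathcal{S}_{i}$.

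The only genuinely conceptual point I would want to isolate is the compatibility of $\mathcal{D}$'s $2Aj$-periodicity with the mod-$4Aj$ description. If $n$ lies in the support then so does $n+2Aj$, and the identity $(n+2Aj)^{2}\equiv n^{2}\pmod{4Aj}$ shows both fall under the same $k_{i}$; moreover $\mathcal{S}_{i}$ was built to be closed under the involution $k\mapsto 4Aj-k$, which corresponds precisely to the $2Aj$-shift on residues mod $4Aj$ (composed with negation, which is absorbed by evenness of $\mathcal{D}$). So the description in terms of $\mathcal{S}_{i}$ is stable under both symmetries of $\mathcal{D}$.

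I do not expect a main obstacle here: the previous two lemmas have already done the arithmetic work, and what remains is unwinding definitions. The only place one must be a little careful is the boundary residues ($n'=2Aj$, $n'=4Aj$) where $k=0$ or $k=2Aj$ can appear; these are handled by noting $S_{i}$ is defined to start at $1$ and that the involution $k\mapsto 4Aj-k$ together with $2Aj$-periodicity makes the description independent of which representative one chose, so no support is lost.
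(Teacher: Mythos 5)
Your argument is correct and coincides with what the paper leaves implicit: the lemma is stated without proof as an immediate consequence of the preceding lemma (which gives $n^{2}\equiv k_{i}\bmod{4Aj}$ on the support of $\mathcal{D}$) together with the definition of $S_{i}$ and $\mathcal{S}_{i}$, and your reduction of $n$ modulo $4Aj$ with the two-case split $n'\le 2Aj$ versus $n'=4Aj-k$ is exactly that unwinding. The only care needed is at the boundary residues, which you note and which the paper does not discuss either.
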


\begin{defn} Let $\mathcal{D}_{i}(n;\omega)$ be the restriction of $\mathcal{D}$ to $\mathcal{S}_{i}.$
 \end{defn}

 Note that \[\sum_{n\geq 0}\mathcal{D}(n;\omega)q^{\frac{n^{2}}{4A}}=\sum_{1 \leq i \leq (j-1)}\sum_{n \geq 0}\mathcal{D}_{i}(n;\omega)q^{\frac{n^{2}}{4A}}.\] Decomposing and $\mathcal{D}$ into $j$ summands will allow us to realize each associated $q$-series as a modular form. Then we will show that these modular forms transform together with respect to $\Gamma(4Aj).$
The following result of \cite{goswamiosburn}, rewritten in terms of our notation, may be applied to each $\mathcal{D}_{i}(n;\omega)$:

\begin{lem}[\cite{goswamiosburn}, Lemma $2.1$] Let $\mathcal{D}_{i}(n;\omega)$ be an even, $2Aj$-periodic function supported on $\mathcal{S}_{i}.$ Then \begin{equation}\label{eq: modular} \theta_{\mathcal{D}}:= \sum_{n \geq 0}\mathcal{D}(n;\omega)q^{\frac{n^{2}}{4Aj}}\end{equation} is a modular form of weight $\frac{1}{2}$ with respect to $\Gamma_{1}(4Aj)$ and \[\theta_{\mathcal{D}}(\gamma \tau) = e^{\frac{\pi i a b k_{i}}{2Aj}}(\frac{4cAj}{d})\varepsilon^{-1}_{d}(cz+d)^{\frac{1}{2}}\theta_{\mathcal{D}}(\tau).\]
\end{lem}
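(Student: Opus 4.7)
The plan is to prove the transformation law directly by decomposing $\theta_{\mathcal{D}}$ into a $\C$-linear combination of standard unary theta series $\theta_{h,N}(\tau) := \sum_{n \equiv h \pmod{2N}} q^{n^{2}/(4N)}$ with $N = 2Aj$, and then invoking the classical Shimura--Schoeneberg transformation law for each $\theta_{h,N}$. The statement should be viewed as a repackaging of that classical result; the real content is that the various twists combine cleanly thanks to the congruence $h^{2} \equiv k_{i} \pmod{4Aj}$ that defines $\mathcal{S}_{i}$.

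First, I would write $\theta_{\mathcal{D}}(\tau) = \sum_{h} \mathcal{D}_{i}(h;\omega)\, \theta_{h, N}(\tau)$, where $h$ ranges over representatives modulo $4Aj$ picking out $\mathcal{S}_{i}$. Because $\mathcal{D}_{i}$ is even and $\mathcal{S}_{i}$ is symmetric under $h \mapsto 4Aj - h$, the one-sided restriction $n \geq 0$ in the defining sum of $\theta_{\mathcal{D}}$ combines with this symmetry to yield the two-sided sums inherent in each $\theta_{h,N}$, with the possible $n = 0$ contribution absorbed into a constant.

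Next, I would apply the classical transformation for unary theta series: for $\gamma = \begin{psmallmatrix} a & b \\ c & d \end{psmallmatrix}$ in a principal-type congruence subgroup of the appropriate level, one has
\[
\theta_{h,N}(\gamma \tau) \;=\; e^{\frac{\pi i a b h^{2}}{2N}}\, \varepsilon_{d}^{-1} \left(\tfrac{4Nc}{d}\right) (c\tau + d)^{1/2}\, \theta_{ah, N}(\tau),
\]
and use the $\Gamma_{1}$-congruence $a \equiv 1$ to identify $\theta_{ah,N}$ with $\theta_{h,N}$. The crucial step is then that for every $h \in \mathcal{S}_{i}$ the value $h^{2} \bmod 4Aj$ equals the common residue $k_{i}$; hence $e^{\pi i a b h^{2}/(2Aj)} = e^{\pi i a b k_{i}/(2Aj)}$ is independent of $h$ and pulls out of the sum, producing exactly the multiplier displayed in the lemma. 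Linearity then reassembles the combination back into $\theta_{\mathcal{D}}$, and holomorphy at the cusps is inherited from the corresponding property of each $\theta_{h,N}$.

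The main obstacle will be reconciling normalizations. The classical $\theta_{h,N}$ transformation comes with a Jacobi symbol and a factor $\varepsilon_{d}^{-1}$ whose exact shape depends heavily on convention, and the natural level at which these formulae are clean is typically $\Gamma_{1}(4N) = \Gamma_{1}(8Aj)$ rather than the $\Gamma_{1}(4Aj)$ stated; extracting the precise multiplier $\varepsilon_{d}^{-1}\left(\tfrac{4cAj}{d}\right)$ on $\Gamma_{1}(4Aj)$ requires picking a specific reference (e.g.\ Shimura's original paper or Schoeneberg, Chapter IX) and carefully checking that the coarser congruence conditions still suffice both to kill the shift $h \mapsto ah$ modulo $2N$ and to collapse the Jacobi-symbol correction to the asserted form. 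Away from these bookkeeping details, the argument is a direct specialization of Shimura's transformation law together with the key algebraic observation that $\mathcal{S}_{i}$ is precisely the set on which $n^{2} \equiv k_{i} \pmod{4Aj}$.
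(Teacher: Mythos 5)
The paper does not actually prove this lemma: it is imported verbatim (up to notation) from Goswami--Osburn, Lemma 2.1, so there is no internal argument to compare yours against. Your reconstruction --- decompose $\theta_{\mathcal{D}}$ into unary theta series, apply the Shimura/Schoeneberg transformation law to each, and use the congruence $h^{2}\equiv k_{i}\pmod{4Aj}$ defining $\mathcal{S}_{i}$ to make the multiplier independent of $h$ --- is exactly the standard proof, and is what the cited source does; so the approach is right. One concrete correction to your bookkeeping: you should take $N=Aj$, not $N=2Aj$. With $N=Aj$ the building blocks $\theta_{h,N}(\tau)=\sum_{n\equiv h\ (\mathrm{mod}\ 2N)}q^{n^{2}/(4N)}$ carry the exponent $n^{2}/(4Aj)$ appearing in the lemma (your choice gives $n^{2}/(8Aj)$, which does not match), the congruence classes modulo $2N=2Aj$ line up with the stated periodicity of $\mathcal{D}_{i}$, and the natural level $4N=4Aj$ is precisely the $\Gamma_{1}(4Aj)$ of the statement --- so the level discrepancy you worry about ($\Gamma_{1}(8Aj)$ versus $\Gamma_{1}(4Aj)$) dissolves once the normalization is fixed. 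The remaining points you flag are indeed only conventions: on $\Gamma_{1}(4Aj)$ one has $a\equiv 1\pmod{4Aj}$, so $ah\equiv h\pmod{2Aj}$ and $\theta_{ah,N}=\theta_{h,N}$; the multiplier $e^{\pi i a b h^{2}/(2Aj)}$ depends on $h^{2}$ only modulo $4Aj$ (changing $h^{2}$ by $4Aj$ changes the exponent by $2\pi i ab$), hence equals $e^{\pi i a b k_{i}/(2Aj)}$ uniformly on $\mathcal{S}_{i}$; and since $0\notin\mathcal{S}_{i}$ the one-sided sum over $n\geq 0$ is exactly half the two-sided sum by evenness and $2Aj$-periodicity, with no constant term left over.
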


Here the multiplier depends on $k_{i}$. However, if we further restrict to the subgroup $\Gamma(4Aj)$, $b\equiv 0 \bmod{4Aj}$ and this dependence is eliminated. Therefore, the $j$ summands transform as one under the action of $\Gamma(4Aj) \leq \Gamma_{1}(4Aj).$ Finally, we apply the following lemma to conclude Theorem \ref{thm: nstar modularity}:

\begin{lem} From Definition \ref{def: mod} it follows that if \[f(q):=\sum_{n \geq 0}\mathcal{D}(n)q^{\frac{n^{2}}{4Aj}}\] is a modular (resp. quantum) modular form of weight $\frac{1}{2}$ with respect to $\Gamma(4Aj)$, then \[f(q^{j})=\sum_{n\geq 0}\mathcal{D}(n)q^{\frac{n^{2}}{4A}}\] is a modular (resp. quantum) modular form of weight $\frac{1}{2}$ with respect to $\Gamma(4Aj^{2}).$
\end{lem}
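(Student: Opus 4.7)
The plan is to exploit the fact that the substitution $q \mapsto q^{j}$ corresponds, on the upper half-plane, to the change of variable $\tau \mapsto j\tau$. Writing $F(\tau) := f(e^{2\pi i \tau})$, so that $f(q^{j}) = F(j\tau)$, the goal becomes: given that $F$ is a (quantum) modular form of weight $1/2$ for $\Gamma(4Aj)$, show that $G(\tau) := F(j\tau)$ is one for $\Gamma(4Aj^{2})$.

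The key algebraic step is to realize the assignment $\tau \mapsto j\tau$ as conjugation by $J := \bigl(\begin{smallmatrix} j & 0 \\ 0 & 1\end{smallmatrix}\bigr)$. Given $\gamma = \bigl(\begin{smallmatrix} a & b \\ c & d\end{smallmatrix}\bigr) \in \Gamma(4Aj^{2})$, I would set
\[
\gamma' := J\gamma J^{-1} = \begin{pmatrix} a & jb \\ c/j & d\end{pmatrix}.
\]
Since $c \equiv 0 \pmod{4Aj^{2}}$, the entry $c/j$ is an integer, in fact $c/j \equiv 0 \pmod{4Aj}$; similarly $jb \equiv 0 \pmod{4Aj}$ and $a,d \equiv 1 \pmod{4Aj}$, so $\gamma' \in \Gamma(4Aj)$. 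The identity $\gamma' \cdot (j\tau) = j \cdot (\gamma\tau)$ is immediate from the conjugation formula, and gives the compatibility $G(\gamma\tau) = F(\gamma' \cdot j\tau)$.

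For the modular case, I would then substitute $\sigma = j\tau$ into the transformation law of $F$ under $\gamma'$. A direct computation using $(c/j)\cdot j\tau + d = c\tau + d$ converts the weight-$1/2$ automorphy factor of $F$ with respect to $\gamma'$ into a weight-$1/2$ automorphy factor of $G$ with respect to $\gamma$. The Jacobi symbol rewrites via $\bigl(\tfrac{c/j}{d}\bigr) = \bigl(\tfrac{c}{d}\bigr)\bigl(\tfrac{j}{d}\bigr)$, so the multiplier system transfers from $\chi$ to a new multiplier $\chi'(\gamma) := \chi(\gamma')\bigl(\tfrac{j}{d}\bigr)$ on $\Gamma(4Aj^{2})$, and the $\varepsilon_{d}$ factor is unchanged since the lower-right entries of $\gamma$ and $\gamma'$ agree. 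Boundedness at the cusps transfers immediately because $\Gamma(4Aj^{2})$ acts on $\mathbb{Q}\cup\{i\infty\}$ and $F$ is bounded at every cusp. For quantum modular forms, the obstruction $h_{\gamma}(\tau) = G(\tau) - \overline{\chi'}(\gamma)\,G|_{1/2}\gamma(\tau)$ reduces via the same identity to $h_{\gamma'}(j\tau)$ up to the unit factor $\bigl(\tfrac{j}{d}\bigr)$; since $h_{\gamma'}$ extends to a $\mathcal{C}^{\infty}$ (or real-analytic off the preimage of $i\infty$) function on $\mathbb{R}$, its pullback along the smooth affine map $\tau \mapsto j\tau$ has the same regularity, giving the quantum modular conclusion.

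The main obstacle is bookkeeping rather than content: one must verify that the Jacobi symbol factor $\bigl(\tfrac{j}{d}\bigr)$ and the $\varepsilon_{d}$ factor combine to yield an honest multiplier system (i.e.\ that $\chi'$ respects composition in $\Gamma(4Aj^{2})$), and that the set of cusps at which smoothness is required for the quantum case is correctly translated under $\tau\mapsto j\tau$. These are both routine, since $d$ modulo $4Aj^{2}$ determines $\bigl(\tfrac{j}{d}\bigr)$, and $\tau \mapsto j\tau$ is a diffeomorphism of $\mathbb{R}\setminus\{0\}$ onto itself.
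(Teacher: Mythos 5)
Your proposal is correct, and it actually supplies more than the paper does: the paper states this lemma as following immediately from Definition \ref{def: mod} and records no argument, so the ``paper proof'' is exactly the standard fact you reconstruct. Your route --- realizing $q\mapsto q^{j}$ as $\tau\mapsto j\tau$, conjugating by $J=\mathrm{diag}(j,1)$ so that $\gamma\in\Gamma(4Aj^{2})$ gives $\gamma'=J\gamma J^{-1}\in\Gamma(4Aj)$, and converting the automorphy factor via $(c/j)(j\tau)+d=c\tau+d$ with the multiplier twisted by the unit $\left(\tfrac{j}{d}\right)$ --- is the usual $V_{j}$-operator argument in half-integral weight, and the quantum case transfers exactly as you say, since $h^{G}_{\gamma}(\tau)=h^{F}_{\gamma'}(j\tau)$ and precomposition with the affine map $\tau\mapsto j\tau$ preserves $\mathcal{C}^{\infty}$/real-analytic regularity while sending the excluded point $(\gamma')^{-1}(i\infty)=-jd/c$ back to $\gamma^{-1}(i\infty)=-d/c$. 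Two points you label ``routine'' deserve one more line each if written out: (i) the cusp condition in Definition \ref{def: mod} quantifies over all $\gamma\in\mathrm{SL}(2,\mathbb{Z})$, so to bound $(c\tau+d)^{-1/2}F(j\gamma\tau)$ one should factor $J\gamma=\sigma T$ with $\sigma\in\mathrm{SL}(2,\mathbb{Z})$ and $T$ upper triangular (Hermite normal form), reducing it to boundedness of $F$ at the cusp $\sigma(i\infty)$; and (ii) $\left(\tfrac{j}{d}\right)$ depends only on $d\bmod 4j$, and $d\equiv 1\pmod{4Aj^{2}}$ for $\gamma\in\Gamma(4Aj^{2})$, so the twisted multiplier $\chi'$ is well defined and consistent on the subgroup. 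With those two remarks your argument is complete and matches what the paper implicitly invokes.
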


\section{Example: The $(t,q)$-series for $M(2;(2,1),(3,1),(2,1))$}\label{sec: ex calc}

The plumbing and adjugate matrices are \[M:=\begin{pmatrix} -2 & 1 & 1 & 1\\ 1 & -2 & 0 & 0 \\ 1 & 0 & -3 & 0 \\ 1 & 0 & 0 & -2\end{pmatrix}; M'=\begin{pmatrix}-12 & -6 & -4 & -6 \\ -6 & -7 & -2 & -3 \\ -4 & -2 & -4 & -2 & \\ -6 & -3 & -2 & -7\end{pmatrix}.\]

We also have $A=12$, $ \overline{a_{1}}=\overline{a_{3}}=6,$ and $\overline{a_{2}}=4$. 

By Theorem \ref{thm: 3star main} \[P_{W,[0]}^{\infty}(M)(t^{2},q^8)=\Zhathat_{0}(t^{2},q^{8})(M)+q^{\Delta}[p(q)+\sum_{n\geq 0}\mathcal{D}(n;t)q^{\frac{n^{2}}{48}}].\]

The congruence classes of $n\bmod{24}$ on which $\mathcal{C}(n;t)$ and $\mathcal{D}(n;t)$ are supported are given by the eight possible triples of $\varepsilon$:
\\
\begin{center} 
\begin{tabular}{l|l|c}
$(\varepsilon_{1},\varepsilon_{2},\varepsilon_{3})$ & $n:=\sum_{i}\varepsilon_{i}\overline{a_{i}}+12$ & $n\bmod{24}$ \\ \hline 
$(1,1,1)$ & $28$ & $4$ \\
$(1,1,-1);(-1,1,1)$ & $16$ &  $-8$ \\
$(1,-1,1)$ & $20$ & $-4$ \\
$(-1,-1,1);(1,-1-1)$ & $8$ & $8$ \\
$(-1,1,-1)$ & $4$ & $4$ \\
$(-1,-1,-1)$ & $-4$ & $-4.$ \\
\end{tabular}
\end{center}

Specifically, we have 

\[\mathcal{D}(n;t)=\begin{cases}\pm [t^{\frac{n\mp 4 \pm 24}{12}}-t^{-(\frac{n\mp 4 \pm 24}{12})}+t^{\frac{n\mp 4}{12}}-t^{-(\frac{n\mp 4}{12})}] & n \equiv \pm 4 \bmod{24}\\ \pm 2[t^{\frac{n\mp 8}{12}}-t^{-(\frac{n\mp 8}{12})}] & n\equiv \pm 8 \bmod{24}\end{cases},\] which is even in $n$. When $t=\omega$ is fixed to be a $2j$th root of unity, the map is $24j$-periodic. By \cite[Lemma $5.2$]{LM} and \cite[Lemma $2.1$]{goswamiosburn} 
\[\sum_{n \geq 0}\mathcal{D}(n;\omega)q^{\frac{n^{2}}{48}}\] is a modular form of weight $\frac{1}{2}$ with respect to $\Gamma(48j^{2})$. We can also compute $p(q)$ as follows. For our specific choice of $M$, $Y-X=\{(-1,-1,-1,1)\}$ and $X-Y=\{(1,1,1,-1)\}$, so 
\begin{align*}p(q)&=[C(-1,-1,-1,1)+D(-1,-1,-1,1)]q^{\frac{16}{48}}-[C(1,1,1,-1)+D(1,1,1,-1)]q^{\frac{16}{48}}\\&=2[C(-1,-1,-1,1)]q^{\frac{1}{3}}\\&=[\omega^{2}+\omega^{-2}]q^{\frac{1}{3}}.
\end{align*}

\bibliographystyle{plain}
\bibliography{refs}

\end{document}